   \numberwithin{equation}{section}
\newtheorem{thm}{Theorem}[section]
\newtheorem{cor}[thm]{Corollary}
\newtheorem{lem}[thm]{Lemma}
\newtheorem{prop}[thm]{Proposition}
\newtheorem{defn}[thm]{Definition}
\journal{~}
\begin{document}
\begin{frontmatter}
\author{Sining Wei}
\ead{weisn835@nenu.edu.cn}
\author{Yong Wang\corref{cor2}}
\ead{wangy581@nenu.edu.cn}
\cortext[cor2]{Corresponding author.}

\address{School of Mathematics and Statistics, Northeast Normal University,
Changchun, 130024, P.R.China}

\title{Statistical de Rham Hodge operators and the Kastler-Kalau-Walze type theorem for manifolds with boundary }
\begin{abstract}
In this paper, we give the Lichnerowicz type formulas for statistical de Rham Hodge operators. Moreover, Kastler-Kalau-Walze type theorems for statistical de Rham Hodge operators on compact manifolds with (respectively without) boundary are proved.
\end{abstract}
\begin{keyword} Statistical de Rham Hodge operators; Lichnerowicz type formulas; Kastler-Kalau-Walze type theorem; Noncommutative residue.\\

\end{keyword}
\end{frontmatter}
\section{Introduction}
The noncommutative residue which is found in \cite{Gu,Wo} plays a prominent role in noncommutative geometry. For this reason, it has been studied extensively by geometers. Connes derived a conformal 4-dimensional Polyakov action analogy by the noncommutative residue in \cite{Co1}. Connes gived the relation of the noncommutative residue on a compact manifold $M$ and the Dixmier's trace on pseudodifferential operators of order $-{\rm {dim}}M$ in \cite{Co2}. On the other hand,
Connes has also observed that the noncommutative residue of the square of the inverse of the Dirac
operator was proportional to the Einstein-Hilbert action, which is called the Kastler-Kalau-Walze type theorem now. Kastler gave a
brute-force proof of this theorem in \cite{Ka}. Kalau and Walze \cite{KW} proved this theorem in the normal coordinates system simultaneously.
Ackermann proved that
the Wodzicki residue  of the square of the inverse of the Dirac operator ${\rm  Wres}(D^{-2})$ in turn is essentially the second coefficient
of the heat kernel expansion of $D^{2}$ in \cite{Ac}.

On the other hand, Wang generalized the Connes' results to the case of manifolds with boundary in \cite{Wa1,Wa2},
and proved the Kastler-Kalau-Walze type theorem for the Dirac operator and the signature operator on lower-dimensional manifolds
with boundary \cite{Wa3}. In \cite{Wa3,Wa4}, Wang computed $\widetilde{{\rm Wres}}[\pi^+D^{-1}\circ\pi^+D^{-1}]$ and $\widetilde{{\rm Wres}}[\pi^+D^{-2}\circ\pi^+D^{-2}]$ about symmetric operators, under the circumstances the boundary term vanished. Moreover, Wang got a nonvanishing boundary term for $\widetilde{{\rm Wres}}[\pi^+D^{-1}\circ\pi^+D^{-3}]$ in \cite{Wa5} and give a theoretical explanation for gravitational action on boundary. In others words, Wang provides a kind of method to study the Kastler-Kalau-Walze type theorem for manifolds with boundary.

In \cite{LI}, Iochum and Levy computed heat kernel coefficients for Dirac operators with one-form perturbations and proved that there are no tadpoles for compact spin manifolds without boundary. Recently, we have studied the Lichnerowicz-type formulas for modified Novikov operators. We prove Kastler-Kalau-Walze-type theorems for modified Novikov operators on compact manifolds with (respectively without) a boundary in \cite{We}.
In \cite{Op}, Barbara Opozda introduces statistical de Rham Hodge operators on manifolds with statistical structure.
The aim of this paper is
to prove the Kastler-Kalau-Walze type theorem for statistical de Rham Hodge operators on manifolds without~(with) boundary, and also give the Lichnerowicz formulas about statistical de Rham Hodge operators.

The paper is organized as follows: in Section 2, we give the definition of statistical de Rham Hodge operators and its Lichnerowicz formulas. We also give the basic facts and formulas about the noncommutative residue for manifolds with boundary. In Section 3 and in Section 4, we give some  expressions and symbols of operators associate with statistical de Rham Hodge operators. Moveover, we also prove the Kastler-Kalau-Walze type theorem for statistical de Rham Hodge operators on 4-dimensional and 6-dimensional manifolds with boundary.

\section{Statistical de Rham Hodge Operators and its Lichnerowicz formula}

In this section, we prepare some basic notions about
statistical de Rham Hodge operators.
For details of the geometry of statistical manifolds, see \cite{Op}.

Let $M$ be a $n$-dimensional ($n\geq 3$)
Riemannian manifold with a positive definite Riemannian tensor field $g$,
$s$ be a tensor field and $\hat{\nabla}$ be a connection.
We assume that $M$ is oriented.
Let $\nabla^L$ be the Levi-Civita connection for $g$ and
$Vol_M$ be the volume form determined by $g$.

For all $X, Y, Z\in T_xM, x\in M$, if $\hat{\nabla}$ is
satisfying the following Codazzi condition:
\begin{eqnarray}
(\hat{\nabla}_X g)(Y, Z) = (\hat{\nabla}_Y g)(X, Z),
\end{eqnarray}
we call a structure $(g, \hat{\nabla})$ is a statistical structure, and
a connection $\hat{\nabla}$ is a statistical connection for $g$.
Moreover, we define a statistical manifold with statistical structure by $(M, g, \hat{\nabla})$.

We suppose that $(M, g, \hat{\nabla})$ is statistical manifold. Let $K$ be tensor field, where $K:\Gamma(TM)\times\Gamma(TM)\rightarrow\Gamma(TM)$ and $\Gamma(TM)$ is the vector field of $M$. If $K$ is the difference tensor between $\hat{\nabla}$ and $\nabla^L$, that is
\begin{eqnarray}
\hat{\nabla}_XY = \nabla^L_XY+K_XY,
\end{eqnarray}

$K(X,Y)$ will stand for $K_XY$.
The de Rham derivative $d$ is an elliptic differential operator on $C^\infty(M;\wedge^*T^*M)$.
Then we have the de Rham coderivative $\delta=d^*$ and the symmetric operators $D=d+\delta$.
The standard Hodge Laplacian is defined by
\begin{eqnarray}
\Delta = \delta d+d\delta.
\end{eqnarray}

For a statistical manifold $(M, g, \hat{\nabla})$, we will investigate a (Lichnerowicz) Laplacian relative to the connection $\hat{\nabla}$. If $f$ is a function, then we set
\begin{eqnarray}
\Delta^{\hat{\nabla}} f=-div^{\hat{\nabla}} grad f.
\end{eqnarray}
We now extend the definition (2.4) and set $E=trace_gK(\cdot,\cdot)$. For any differential form $\nu$, we set
\begin{eqnarray}
\Delta^{\hat{\nabla}} \nu=(\delta-l(E))d\nu+d(\delta-l(E))\nu,
\end{eqnarray}
where $l(E)$ is the contraction operator.

By the above definition and Lemma 6.4 in \cite{Op}, we have
\begin{eqnarray}
\Delta^{\hat{\nabla}}=(\delta-l(E)+d)^2.
\end{eqnarray}



For $v\in \Gamma(TM)$, we define the generalized statistical de Rham Hodge operators by
\begin{eqnarray}
D_i&=&d+\delta+\lambda_il(v),~(i=1,2);\nonumber\\
~D^*_i&=&d+\delta+\lambda_i\varepsilon(v^*),~(i=1,2),
\end{eqnarray}
where $\lambda_i$ is a real number.


In the local coordinates $\{x_i; 1\leq i\leq n\}$ and the
fixed orthonormal frame $\{e_1,\cdots,e_n\}$, the connection matrix $(\omega_{s,t})$ is defined by
\begin{equation}
\nabla^L(e_1,\cdots,e_n)= (e_1,\cdots,e_n)(\omega_{s,t}).
\end{equation}

Let $\varepsilon (e_j* ),~l (e_j)$ be the exterior and interior multiplications respectively and $c(e_j)$ be the Clifford action. Write
\begin{equation}
c(e_j)=\varepsilon (e_j*)-l(e_j);~~
\bar{c}(e_j)=\varepsilon(e_j*)+l(e_j).
\end{equation}

Moreover, we assume that $\partial_{i}$ is a natural local frame on $TM$
and $(g^{ij})_{1\leq i,j\leq n}$ is the inverse matrix associated with the metric
matrix  $(g_{ij})_{1\leq i,j\leq n}$ on $M$.
The statistical de Rham Hodge operators $D_i$ and $D^*_i$~$(i=1,2)$ are defined by
\begin{equation}
D_i=d+\delta+\lambda_il(v)=\sum^n_{i=1}c(e_i)\bigg[e_i+\frac{1}{4}\sum_{s,t}\omega_{s,t}
(e_i)[\bar{c}(e_s)\bar{c}(e_t)
-c(e_s)c(e_t)]\bigg]+\lambda_il(v);
\end{equation}
\begin{equation}
D^*_i=d+\delta+\lambda_i\varepsilon(v^*)=\sum^n_{i=1}c(e_i)\bigg[e_i+\frac{1}{4}\sum_{s,t}\omega_{s,t}
(e_i)[\bar{c}(e_s)\bar{c}(e_t)
-c(e_s)c(e_t)]\bigg]+\lambda_i\varepsilon(v^*).
\end{equation}

Let $g^{ij}=g(dx_{i},dx_{j})$, $\xi=\sum_{k}\xi_{j}dx_{j}$ and $\nabla^L_{\partial_{i}}\partial_{j}=\sum_{k}\Gamma_{ij}^{k}\partial_{k}$,  we denote that
\begin{eqnarray}
&&\sigma_{i}=-\frac{1}{4}\sum_{s,t}\omega_{s,t}
(e_i)c(e_s)c(e_t)
;~~~a_{i}=\frac{1}{4}\sum_{s,t}\omega_{s,t}
(e_i)\bar{c}(e_s)\bar{c}(e_t);\nonumber\\
&&\xi^{j}=g^{ij}\xi_{i};~~~~\Gamma^{k}=g^{ij}\Gamma_{ij}^{k};~~~~\sigma^{j}=g^{ij}\sigma_{i};
~~~~a^{j}=g^{ij}a_{i}.
\end{eqnarray}
Then the statistical de Rham Hodge operators $D_i$ and $D^*_i$ can be written as
\begin{equation}
D_i=\sum^n_{i=1}c(e_i)[e_i+a_{i}+\sigma_{i}]
+\lambda_il(v),~(i=1,2);
\end{equation}
\begin{equation}
D_i^*=\sum^n_{i=1}c(e_i)[e_i+a_{i}+\sigma_{i}]
+\lambda_i\varepsilon(v^*),~(i=1,2).
\end{equation}

On the other hand, we recall some basic facts and formulas about Boutet de
Monvel's calculus and the definition of the noncommutative residue for manifolds with boundary (see details in Section 2 in \cite{Wa3}).

Let $U\subset M$ be a collar neighborhood of $\partial M$ which is diffeomorphic with $\partial M\times [0,1)$. By the definition of $h(x_n)\in C^{\infty}([0,1))$
and $h(x_n)>0$, there exists $\tilde{h}\in C^{\infty}((-\varepsilon,1))$ such that $\tilde{h}|_{[0,1)}=h$ and $\tilde{h}>0$ for some
sufficiently small $\varepsilon>0$. Then there exists a metric $\widehat{g}$ on $\widehat{M}=M\bigcup_{\partial M}\partial M\times
(-\varepsilon,0]$ which has the form on $U\bigcup_{\partial M}\partial M\times (-\varepsilon,0 ]$
\begin{equation}
\widehat{g}=\frac{1}{\tilde{h}(x_{n})}g^{\partial M}+dx _{n}^{2} ,
\end{equation}
such that $\widehat{g}|_{M}=g$. We fix a metric $\widehat{g}$ on the $\widehat{M}$ such that $\widehat{g}|_{M}=g$.

Let  \begin{equation*}
F:L^2({\bf R}_t)\rightarrow L^2({\bf R}_\lambda);~F(u)(\lambda)=\int e^{-ivt}u(t)dt
\end{equation*}
denote the Fourier transformation and
$\varphi(\overline{{\bf R}^+}) =r^+\varphi({\bf R})$ (similarly define $\varphi(\overline{{\bf R}^-}$)), where $\varphi({\bf R})$
denotes the Schwartz space and
\begin{equation*}
r^{+}:C^\infty ({\bf R})\rightarrow C^\infty (\overline{{\bf R}^+});~ f\rightarrow f|\overline{{\bf R}^+};~
\overline{{\bf R}^+}=\{x\geq0;x\in {\bf R}\}.
\end{equation*}
We define $H^+=F(\varphi(\overline{{\bf R}^+}));~ H^-_0=F(\varphi(\overline{{\bf R}^-}))$ which are orthogonal to each other. We have the following
 property: $h\in H^+~(H^-_0)$ if and only if $h\in C^\infty({\bf R})$ which has an analytic extension to the lower (upper) complex
half-plane $\{{\rm Im}\xi<0\}~(\{{\rm Im}\xi>0\})$ such that for all nonnegative integer $l$,
 \begin{equation*}
\frac{d^{l}h}{d\xi^l}(\xi)\sim\sum^{\infty}_{k=1}\frac{d^l}{d\xi^l}(\frac{c_k}{\xi^k}),
\end{equation*}
as $|\xi|\rightarrow +\infty,{\rm Im}\xi\leq0~({\rm Im}\xi\geq0)$.

 Let $H'$ be the space of all polynomials and $H^-=H^-_0\bigoplus H';~H=H^+\bigoplus H^-.$ Denote by $\pi^+~(\pi^-)$ respectively the
 projection on $H^+~(H^-)$. For calculations, we take $H=\widetilde H=\{$rational functions having no poles on the real axis$\}$ ($\tilde{H}$
 is a dense set in the topology of $H$). Then on $\tilde{H}$,
 \begin{equation}
\pi^+h(\xi_0)=\frac{1}{2\pi i}\lim_{u\rightarrow 0^{-}}\int_{\Gamma^+}\frac{h(\xi)}{\xi_0+iu-\xi}d\xi,
\end{equation}
where $\Gamma^+$ is a Jordan close curve
included ${\rm Im}(\xi)>0$ surrounding all the singularities of $h$ in the upper half-plane and
$\xi_0\in {\bf R}$. Similarly, define $\pi'$ on $\tilde{H}$,
\begin{equation}
\pi'h=\frac{1}{2\pi}\int_{\Gamma^+}h(\xi)d\xi.
\end{equation}
So, $\pi'(H^-)=0$. For $h\in H\bigcap L^1({\bf R})$, $\pi'h=\frac{1}{2\pi}\int_{{\bf R}}h(v)dv$ and for $h\in H^+\bigcap L^1({\bf R})$, $\pi'h=0$.

Next we give the basic notions of Laplace type operators. Let $M$ be a smooth compact oriented Riemannian $n$-dimensional manifolds without boundary and $V'$ be a vector bundle on $M$. Any differential operator $P$ of Laplace type has locally the form
\begin{equation}
P=-(g^{ij}\partial_i\partial_j+A^i\partial_i+B),
\end{equation}
where
$A^{i}$ and $B$ are smooth sections
of the endomorphism $\textrm{End}(V')$ on $M$. If $P$ is a Laplace type
operator with the form (2.15), then there is a unique
connection $\nabla$ on $V'$ and a unique endomorphism $E'$ such that
 \begin{equation}
P=-\Big[g^{ij}(\nabla_{\partial_{i}}\nabla_{\partial_{j}}-
 \nabla_{\nabla^{L}_{\partial_{i}}\partial_{j}})+E'\Big],
\end{equation}
where $\nabla^{L}$ is the Levi-Civita connection on $M$. Moreover
(with local frames of $T^{*}M$ and $V'$), $\nabla_{\partial_{i}}=\partial_{i}+\omega_{i} $
and $E'$ are related to $g^{ij}$, $A^{i}$ and $B$ through
 \begin{eqnarray}
&&\omega_{i}=\frac{1}{2}g_{ij}\big(A^{i}+g^{kl}\Gamma_{ kl}^{j} \texttt{id}\big),\\
&&E'=B-g^{ij}\big(\partial_{i}(\omega_{j})+\omega_{i}\omega_{j}-\omega_{k}\Gamma_{ ij}^{k} \big),
\end{eqnarray}
where $\Gamma_{ kl}^{j}$ is the  Christoffel coefficient of $\nabla^{L}$.

By the above definitions, we establish the main theorem in this section. One has the following Lichneriowicz formulas.

\begin{thm} The following equalities hold:
\begin{eqnarray}
D_1D_2&=&-\Big[g^{ij}(\nabla_{\partial_{i}}\nabla_{\partial_{j}}-
\nabla_{\nabla^{L}_{\partial_{i}}\partial_{j}})\Big]
-\frac{1}{8}\sum_{ijkl}R_{ijkl}\bar{c}(e_i)\bar{c}(e_j)
c(e_k)c(e_l)+\frac{1}{4}s
\nonumber\\
&&
+\frac{1}{4}\sum_{i}
[\lambda_2c(e_{i})l(v)+\lambda_1l(v)c(e_{i})]^2-\frac{1}{2}[\lambda_1\nabla^{TM}_{e_{j}}(l(v))c(e_{j})
-\lambda_2c(e_{j})\nabla^{TM}_{e_{j}}(l(v))],\\
D^*_2D^*_1&=&-\Big[g^{ij}(\nabla_{\partial_{i}}\nabla_{\partial_{j}}-
\nabla_{\nabla^{L}_{\partial_{i}}\partial_{j}})\Big]
-\frac{1}{8}\sum_{ijkl}R_{ijkl}\bar{c}(e_i)\bar{c}(e_j)
c(e_k)c(e_l)+\frac{1}{4}s+\frac{1}{4}\sum_{i}
[\lambda_1c(e_{i})\varepsilon(v^*)
\nonumber\\
&&
+\lambda_2\varepsilon(v^*)c(e_{i})]^2
-\frac{1}{2}[\lambda_2\nabla^{TM}_{e_{j}}(\varepsilon(v^*))c(e_{j})
-\lambda_1c(e_{j})\nabla^{TM}_{e_{j}}(\varepsilon(v^*))],\\
D_2^*D_1&=&-\Big[g^{ij}(\nabla_{\partial_{i}}\nabla_{\partial_{j}}-
\nabla_{\nabla^{L}_{\partial_{i}}\partial_{j}})\Big]
-\frac{1}{8}\sum_{ijkl}R_{ijkl}\bar{c}(e_i)\bar{c}(e_j)
c(e_k)c(e_l)
+\frac{1}{4}s+\lambda_1\lambda_2\varepsilon(v^*)l(v)\nonumber\\
&&+\frac{1}{4}\sum_{i}
[\lambda_1c(e_{i})l(v)+\lambda_2\varepsilon(v^*)c(e_{i})]^2
-\frac{1}{2}[\lambda_2\nabla^{TM}_{e_{j}}(\varepsilon(v^*))c(e_{j})
-\lambda_1c(e_{j})\nabla^{TM}_{e_{j}}(l(v))],
\end{eqnarray}
where $s$ is the scalar curvature.
\end{thm}
\begin{proof}By (2.13),
we note that
\begin{eqnarray}
D_1D_2&=&(d+\delta)^2+\lambda_2(d+\delta)l(v)
+\lambda_1l(v)(d+\delta)+\lambda_1\lambda_2[l(v)]^2.
\end{eqnarray}
By \cite{Y}, the local expression of $(d+\delta)^{2}$ is
\begin{equation}
(d+\delta)^{2}
=-\triangle_{0}-\frac{1}{8}\sum_{ijkl}R_{ijkl}\bar{c}(\widetilde{e_i})\bar{c}(\widetilde{e_j})c(\widetilde{e_k})c(\widetilde{e_l})+\frac{1}{4}s.
\end{equation}

By \cite{Y} and \cite{Ac}, we have
\begin{equation}
-\triangle_{0}=\Delta=-g^{ij}(\nabla^L_{i}\nabla^L_{j}-\Gamma_{ij}^{k}\nabla^L_{k}).
\end{equation}

\begin{eqnarray}
\lambda_2(d+\delta)l(v)+\lambda_1l(v)(d+\delta)&=&\sum_{i,j}g^{i,j}\Big[\lambda_2c(\partial_{i})l(v)
+\lambda_1l(v)c(\partial_{i})\Big]\partial_{j}
+\sum_{i,j}g^{i,j}\Big[\lambda_1l(v)c(\partial_{i})(\sigma_{i}+a_i)\nonumber\\
&&+\lambda_2c(\partial_{i})\partial_{j}(l(v))+\lambda_2c(\partial_{i})(\sigma_{i}+a_i)l(v)\Big];
\end{eqnarray}
\begin{eqnarray}
[l(v)]^2=0.
\end{eqnarray}
then we obtain
\begin{eqnarray}
D_1D_2&=&-\sum_{i,j}g^{i,j}\Big[\partial_{i}\partial_{j}
+2\sigma_{i}\partial_{j}+2a_{i}\partial_{j}-\Gamma_{i,j}^{k}\partial_{k}
+(\partial_{i}\sigma_{j})
+(\partial_{i}a_{j})
+\sigma_{i}\sigma_{j}+\sigma_{i}a_{j}+a_{i}\sigma_{j}+a_{i}a_{j} -\Gamma_{i,j}^{k}\sigma_{k}\nonumber\\
&&-\Gamma_{i,j}^{k}a_{k}\Big]
+\sum_{i,j}g^{i,j}\Big[\lambda_2c(\partial_{i})l(v)+\lambda_1l(v)c(\partial_{i})\Big]\partial_{j}
-\frac{1}{8}\sum_{ijkl}R_{ijkl}\bar{c}(e_i)\bar{c}(e_j)
c(e_k)c(e_l)+\frac{1}{4}s\nonumber\\
&&+\sum_{i,j}g^{i,j}\Big[\lambda_2c(\partial_{i})\partial_{j}(l(v))
+\lambda_2c(\partial_{i})(\sigma_{i}+a_i)l(v)+
\lambda_1l(v)c(\partial_{i})(\sigma_{i}+a_{i})\Big].
\end{eqnarray}
Similarly, we have
\begin{eqnarray}
D^*_2D^*_1&=&-\sum_{i,j}g^{i,j}\Big[\partial_{i}\partial_{j}
+2\sigma_{i}\partial_{j}+2a_{i}\partial_{j}-\Gamma_{i,j}^{k}\partial_{k}
+(\partial_{i}\sigma_{j})
+(\partial_{i}a_{j})
+\sigma_{i}\sigma_{j}+\sigma_{i}a_{j}+a_{i}\sigma_{j}+a_{i}a_{j}-\Gamma_{i,j}^{k}\sigma_{k}\nonumber\\
&&-\Gamma_{i,j}^{k}a_{k}\Big]
+\sum_{i,j}g^{i,j}\Big[\lambda_1c(\partial_{i})\varepsilon(v^*)
+\lambda_2\varepsilon(v^*)c(\partial_{i})\Big]\partial_{j}
-\frac{1}{8}\sum_{ijkl}R_{ijkl}\bar{c}(e_i)\bar{c}(e_j)
c(e_k)c(e_l)+\frac{1}{4}s\nonumber\\
&&+\sum_{i,j}g^{i,j}\Big[\lambda_1c(\partial_{i})\partial_{j}(\varepsilon(v^*))
+\lambda_1c(\partial_{i})(\sigma_{i}+a_i)\varepsilon(v^*)+
\lambda_2\varepsilon(v^*)c(\partial_{i})(\sigma_{i}+a_{i})\Big].
\end{eqnarray}
and
\begin{eqnarray}
D^*_2D_1&=&-\sum_{i,j}g^{i,j}\Big[\partial_{i}\partial_{j}
+2\sigma_{i}\partial_{j}+2a_{i}\partial_{j}-\Gamma_{i,j}^{k}\partial_{k}
+(\partial_{i}\sigma_{j})
+(\partial_{i}a_{j})
+\sigma_{i}\sigma_{j}+\sigma_{i}a_{j}+a_{i}\sigma_{j}+a_{i}a_{j} -\Gamma_{i,j}^{k}\sigma_{k}\nonumber\\
&&-\Gamma_{i,j}^{k}a_{k}\Big]
+\sum_{i,j}g^{i,j}\Big[\lambda_1c(\partial_{i})l(v)+\lambda_2\varepsilon(v^*)c(\partial_{i})\Big]\partial_{j}
-\frac{1}{8}\sum_{ijkl}R_{ijkl}\bar{c}(e_i)\bar{c}(e_j)
c(e_k)c(e_l)+\frac{1}{4}s\nonumber\\
&&+\sum_{i,j}g^{i,j}\Big[\lambda_1c(\partial_{i})\partial_{j}(l(v))
+\lambda_1c(\partial_{i})(\sigma_{i}+a_i)l(v)+
\lambda_2\varepsilon(v^*)c(\partial_{i})(\sigma_{i}+a_{i})\Big]+\lambda_1\lambda_2\varepsilon(v^*)l(v).
\end{eqnarray}

By (2.18), (2.20) and (2.30) we have
\begin{eqnarray}
(\omega_{i})_{D_1D_2}&=&\sigma_{i}+a_{i}
-\frac{1}{2}\Big[\lambda_2c(\partial_{i})l(v)+\lambda_1l(v)c(\partial_{i})\Big],\\
E'_{D_1D_2}&=&\sum_{i,j}g^{i,j}\Big[
\partial_{i}(\sigma_{j}+a_j)+\sigma_{i}\sigma_{j}+\sigma_{i}a_{j}+a_{i}\sigma_{j}
-\Gamma_{ij}^{k}\sigma_{k}-\Gamma_{ij}^{k}a_{k}+a_{i}a_{j}
-\lambda_2c(\partial_{i})\partial_{j}(l(v))-c(\partial_{i})(\sigma_{j}\nonumber\\
&&+a_j)\lambda_2l(v)
-\lambda_1l(v)c(\partial_{i})(\sigma_{j}+a_j)\Big]
+\frac{1}{8}\sum_{ijkl}R_{ijkl}\bar{c}(e_i)\bar{c}(e_j)
c(e_k)c(e_l)-\frac{1}{4}s-\lambda_1\lambda_2[l(v)]^{2}
\nonumber\\
&&-\sum_{i,j}g^{i,j}\{\partial_{i}(\sigma_{j}+a_{j})
-\frac{1}{2}\partial_{i}[c(\partial_{j})\lambda_2l(v)+\lambda_1l(v)c(\partial_{j})]
+\Big[\sigma_{i}+a_{i}-\frac{1}{2}[\lambda_2c(\partial_{i})l(v)+\lambda_1l(v)c(\partial_{i})]\Big]
\nonumber\\
&&\times\Big[\sigma_{j}+a_{j}-\frac{1}{2}[\lambda_2c(\partial_{j})l(v)+\lambda_1l(v)c(\partial_{j})]\Big]
-\Big[\sigma_{k}+a_{k}-\frac{1}{2}[c(\partial_{k})\lambda_2l(v)+\lambda_1l(v)c(\partial_{k})]\Big]\Gamma_{ ij}^{k}
\}.
\end{eqnarray}
Let $c(Y)$ denote the Clifford action, where $Y$ is a smooth vector field on $M$. Since $E'$ is globally defined on $M$, taking normal coordinates at $x_0$, we have
$\sigma^{i}(x_0)=0$, $a^{i}(x_0)=0$, $\partial^{j}[c(\partial_{j})](x_0)=0$,
$\Gamma^k(x_0)=0$, $g^{ij}(x_0)=\delta^j_i$. By (2.15), then we have
\begin{eqnarray}
E'_{D_1D_2}(x_0)&=&\frac{1}{8}\sum_{ijkl}R_{ijkl}\bar{c}(e_i)\bar{c}(e_j)
c(e_k)c(e_l)
-\frac{1}{4}s-\frac{1}{4}\sum_{i}[\lambda_2c(e_{i})l(v)+\lambda_1l(v)c(e_{i})]^2\nonumber\\
&&+\frac{1}{2}[\lambda_1\nabla^{TM}_{e_{j}}(l(v))c(e_{j})-\lambda_2c(e_{j})\nabla^{TM}_{e_{j}}(l(v))].
\end{eqnarray}
Similarly, we have
\begin{eqnarray}
E'_{D^*_2D^*_1}(x_0)&=&\frac{1}{8}\sum_{ijkl}R_{ijkl}\bar{c}(e_i)\bar{c}(e_j)
c(e_k)c(e_l)
-\frac{1}{4}s-\frac{1}{4}\sum_{i}[\lambda_1c(e_{i})\varepsilon(v^*)
+\lambda_2\varepsilon(v^*)c(e_{i})]^2\nonumber\\
&&+\frac{1}{2}[\lambda_2\nabla^{TM}_{e_{j}}(\varepsilon(v^*))c(e_{j})
-\lambda_1c(e_{j})\nabla^{TM}_{e_{j}}(\varepsilon(v^*))].
\end{eqnarray}
\begin{eqnarray}
E'_{D^*_2D_1}(x_0)&=&\frac{1}{8}\sum_{ijkl}R_{ijkl}\bar{c}(e_i)\bar{c}(e_j)
c(e_k)c(e_l)
-\frac{1}{4}s-\frac{1}{4}\sum_{i}[\lambda_1c(e_{i})l(v)+\lambda_2\varepsilon(v^*)c(e_{i})]^2\nonumber\\
&&+\frac{1}{2}[\lambda_2\nabla^{TM}_{e_{j}}(\varepsilon(v^*))c(e_{j})
-\lambda_1c(e_{j})\nabla^{TM}_{e_{j}}(l(v))]
-\lambda_1\lambda_2\varepsilon(v^*)l(v).
\end{eqnarray}
which, together with (2.19), we complete the proof.
\end{proof}

The non-commutative residue of a generalized laplacian $\Delta$ is expressed as by \cite{Ac}

\begin{equation}
(n-2)\phi(\Delta)=(4\pi)^{-\frac{n}{2}}\Gamma(\frac{n}{2})\widetilde{res}(\Delta^{-\frac{n}{2}+1}),
\end{equation}
where $\phi(\Delta)$ denotes the integral over the diagonal part of the second
coefficient of the heat kernel expansion of $\Delta$.
Since $D_1D_2$, $D^*_2D^*_1$ and $D^*_2D_1$ are generalized laplacian opeartors, we have
\begin{eqnarray}
{\rm Wres}(D_1D_2)^{-\frac{n-2}{2}}
=\frac{(n-2)(4\pi)^{\frac{n}{2}}}{(\frac{n}{2}-1)!}\int_{M}{\rm trace}(\frac{1}{6}s+E'_{D_1D_2})d{\rm Vol_{M}},
\end{eqnarray}
where ${\rm Wres}$ is the noncommutative residue.

Similarly, we have
\begin{eqnarray}
{\rm Wres}(D^*_2D^*_1)^{-\frac{n-2}{2}}
=\frac{(n-2)(4\pi)^{\frac{n}{2}}}{(\frac{n}{2}-1)!}\int_{M}{\rm trace}(\frac{1}{6}s+E'_{D^*_2D^*_1})d{\rm Vol_{M}},
\end{eqnarray}
\begin{eqnarray}
{\rm Wres}(D^*_2D_1)^{-\frac{n-2}{2}}
=\frac{(n-2)(4\pi)^{\frac{n}{2}}}{(\frac{n}{2}-1)!}\int_{M}{\rm trace}(\frac{1}{6}s+E'_{D^*_2D_1})d{\rm Vol_{M}},
\end{eqnarray}
By the Theorem 2.1 and its proof, we have
\begin{thm} For even $n$-dimensional compact oriented manifolds without boundary, the following equalities holds:
\begin{eqnarray}
{\rm Wres}(D_1D_2)^{-\frac{n-2}{2}}
&=&\frac{(n-2)(4\pi)^{\frac{n}{2}}}{(\frac{n}{2}-1)!}\int_{M}
2^n\bigg(-\frac{1}{12}s-\frac{1}{4}(\lambda^2_1+\lambda^2_2)|v|^2\bigg)d{\rm Vol_{M}}\\
{\rm Wres}(D^*_2D^*_1)^{-\frac{n-2}{2}}
&=&\frac{(n-2)(4\pi)^{\frac{n}{2}}}{(\frac{n}{2}-1)!}\int_{M}
2^n\bigg(-\frac{1}{12}s-\frac{1}{4}(\lambda^2_1+\lambda^2_2)|v^*|^2\bigg)d{\rm Vol_{M}}\\
{\rm Wres}(D^*_2D_1)^{-\frac{n-2}{2}}
&=&\frac{(n-2)(4\pi)^{\frac{n}{2}}}{(\frac{n}{2}-1)!}\int_{M}
\bigg[2^n\bigg(-\frac{1}{12}s
-\frac{\lambda^2_1+\lambda^2_2-2n\lambda_1\lambda_2+4\lambda_1\lambda_2}{8}|v|^2\bigg)\nonumber\\
&&+\frac{1}{2}{\rm tr}[\lambda_2\nabla^{TM}_{e_{j}}(\varepsilon(v^*))c(e_{j})-\lambda_1c(e_{j})\nabla^{TM}_{e_{j}}(l(v))]
\bigg]d{\rm Vol_{M}}.
\end{eqnarray}
where $s$ is the scalar curvature.
\end{thm}

\section{A Kastler-Kalau-Walze type theorem for $4$-dimensional manifolds with boundary}
In this section, we prove the Kastler-Kalau-Walze type theorem for $4$-dimensional oriented compact manifold with boundary about statistical de Rham Hodge Operators.

Denote by $M$ a $n$-dimensional manifold with boundary $\partial M$. We assume that $M$ is compact and oriented.
Let $\mathcal{B}$ be Boutet de Monvel's algebra, we now recall the main theorem in \cite{FGLS,Wa3}.
\begin{thm}\label{th:32}{\rm\cite{FGLS}}{\bf(Fedosov-Golse-Leichtnam-Schrohe)}
 Let $X$ and $\partial X$ be connected, ${\rm dim}X=n\geq3$,
 $A=\left(\begin{array}{lcr}\pi^+P+G &   K \\
T &  S    \end{array}\right)$ $\in \mathcal{B}$ , and denote by $p$, $b$ and $s$ the local symbols of $P,G$ and $S$ respectively.
 Define:
 \begin{eqnarray}
{\rm{\widetilde{Wres}}}(A)&=&\int_X\int_{\bf S}{\rm{trace}}_E\left[p_{-n}(x,\xi)\right]\sigma(\xi)dx \nonumber\\
&&+2\pi\int_ {\partial X}\int_{\bf S'}\left\{{\rm trace}_E\left[({\rm{tr}}b_{-n})(x',\xi')\right]+{\rm{trace}}
_F\left[s_{1-n}(x',\xi')\right]\right\}\sigma(\xi')dx',
\end{eqnarray}
Then\\

~~a) ${\rm \widetilde{Wres}}([A,B])=0 $, for any
$A,B\in\mathcal{B}$;\\

~~b) It is a unique continuous trace on
$\mathcal{B}/\mathcal{B}^{-\infty}$.
\end{thm}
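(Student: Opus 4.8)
This is the theorem of Fedosov--Golse--Leichtnam--Schrohe \cite{FGLS}, and it underlies all of the boundary computations in Sections 3--4; here I outline the strategy one would follow. The first step is a reduction to a local model together with an invariance check. Fix a coordinate atlas of $X$ adapted to the collar $\partial X\times[0,1)$ of Section 2, with normal coordinate $x_n$, and a subordinate partition of unity. Since the symbols $p_{-n}(x,\xi)$, $({\rm tr}\,b)_{-n}(x',\xi')$ and $s_{1-n}(x',\xi')$ entering (3.1) depend only on finitely many terms of the full symbol of $A$, one reduces the whole discussion to operators on ${\bf R}^n_+$ and must show that the integrand of (3.1) transforms as a density on $X$ (resp. on $\partial X$) under changes of adapted coordinates. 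For the interior term this is the classical Guillemin--Wodzicki computation giving coordinate invariance of $\int_{\bf S}p_{-n}\,\sigma(\xi)$; for the two boundary terms one runs the same computation on the $(n-1)$-dimensional manifold $\partial X$, the drop of homogeneity from $-n$ to $1-n$ being precisely what makes $\int_{\bf S'}({\rm tr}\,b)_{-n}\,\sigma(\xi')$ and $\int_{\bf S'}s_{1-n}\,\sigma(\xi')$ into $(n-1)$-densities.

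For part (a) one expands a commutator $[A,B]$ in block form inside Boutet de Monvel's calculus. The upper-left entry splits as $[\pi^+P_1,\pi^+P_2]$ plus a singular Green operator built from $G_1,G_2$, from the products $K_iT_j$, and from the ``leftover'' operator $L(P_1,P_2):=\pi^+(P_1P_2)-\pi^+P_1\,\pi^+P_2$; the lower-right entry is $[S_1,S_2]$ plus the $T_iK_j$-terms. For the interior piece one uses that the Wodzicki residue is a trace on classical pseudodifferential operators on the double $\widehat X$, so only a contribution localized at $\partial X$ can survive, and this is matched against the singular Green and $S$-contributions. The cancellation is done symbol by symbol from the explicit composition rules for singular Green, Poisson and trace symbols; after the integration in the normal covariable $\xi_n$ the cross terms are of the type handled by the $\pi^{\pm}$-calculus recalled in Section 2 and drop out, while the surviving pieces reassemble into $2\pi\int_{\bf S'}[({\rm tr}\,b_{[A,B]})_{-n}+s_{[A,B],\,1-n}]$, which vanishes because that boundary functional is itself a trace on the algebra of operators on $\partial X$.

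For part (b) the plan is to show that $\mathcal B/\overline{[\mathcal B,\mathcal B]}$ is one-dimensional. Let $\tau$ be a continuous trace on $\mathcal B$ vanishing on $\mathcal B^{-\infty}$. Using the radial vector field $\sum_j\xi_j\partial_{\xi_j}$, exactly as in Wodzicki's closed-manifold argument, one shows that an interior operator whose degree $-n$ symbol has fibrewise integral zero lies in $\overline{[\mathcal B,\mathcal B]}$, so on interior operators $\tau$ is a fixed multiple $c\cdot{\rm Wres}$; an analogous homotopy in the variables $(x',\xi',\xi_n)$, compatible with the transmission condition and the $H^{\pm}$-splitting, shows the same for singular Green and $S$-operators whose relevant symbol component has integral zero over ${\bf S'}$. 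Because the off-diagonal entries $K$ and $T$ intertwine the interior and boundary blocks, the two normalizations must coincide, which forces $\tau=c\,\widetilde{{\rm Wres}}$, and continuity then extends this from $\widetilde H$ to $H$, giving the assertion on $\mathcal B/\mathcal B^{-\infty}$. The main obstacle is this last point: on a closed manifold the description of the commutator subspace is classical, but here the Euler-field homotopy must be carried out within Boutet de Monvel's calculus without destroying the transmission property, and one has to exclude an a priori extra trace concentrated on the boundary block alone, which is where the delicate bookkeeping of \cite{FGLS} is concentrated.
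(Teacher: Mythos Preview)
The paper does not give its own proof of this statement: Theorem~3.1 is simply quoted from \cite{FGLS} (introduced by ``we now recall the main theorem in \cite{FGLS,Wa3}'') and used as a black box for the subsequent boundary computations. There is therefore nothing in the paper to compare your sketch against; your outline is a reasonable summary of the strategy in the original Fedosov--Golse--Leichtnam--Schrohe paper, and in the context of this manuscript a bare citation would have sufficed.
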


\begin{defn}{\rm\cite{Wa3} }
Lower dimensional volumes of spin manifolds with boundary are defined by
\begin{equation}
{\rm Vol}^{(p_1,p_2)}_nM:= \widetilde{{\rm Wres}}[\pi^+D^{-p_1}\circ\pi^+D^{-p_2}].
\end{equation}
\end{defn}
By (2.1.4)-(2.1.8) in \cite{Wa3}, we get
\begin{equation}
\widetilde{{\rm Wres}}[\pi^+D^{-p_1}\circ\pi^+D^{-p_2}]=\int_M\int_{|\xi|=1}{\rm
trace}_{\wedge^*T^*M}[\sigma_{-n}(D^{-p_1-p_2})]\sigma(\xi)dx+\int_{\partial M}\Phi,
\end{equation}
and
\begin{eqnarray}
\Phi &=&\int_{|\xi'|=1}\int^{+\infty}_{-\infty}\sum^{\infty}_{j, k=0}\sum\frac{(-i)^{|\alpha|+j+k+1}}{\alpha!(j+k+1)!}
\times {\rm trace}_{\wedge^*T^*M}[\partial^j_{x_n}\partial^\alpha_{\xi'}\partial^k_{\xi_n}\sigma^+_{r}(D^{-p_1})(x',0,\xi',\xi_n)
\nonumber\\
&&\times\partial^\alpha_{x'}\partial^{j+1}_{\xi_n}\partial^k_{x_n}\sigma_{l}(D^{-p_2})(x',0,\xi',\xi_n)]d\xi_n\sigma(\xi')dx',
\end{eqnarray}
 where the sum is taken over $r+l-k-|\alpha|-j-1=-n,~~r\leq -p_1,l\leq -p_2$.


For any fixed point $x_0\in\partial M$, we choose the normal coordinates
$U$ of $x_0$ in $\partial M$ (not in $M$) and compute $\Phi(x_0)$ in the coordinates $\widetilde{U}=U\times [0,1)\subset M$ and the
metric $\frac{1}{h(x_n)}g^{\partial M}+dx_n^2.$ The dual metric of $g^M$ on $\widetilde{U}$ is ${h(x_n)}g^{\partial M}+dx_n^2.$  Write
$g^M_{ij}=g^M(\frac{\partial}{\partial x_i},\frac{\partial}{\partial x_j});~ g_M^{ij}=g^M(dx_i,dx_j)$, then

\begin{equation}
[g^M_{i,j}]= \left[\begin{array}{lcr}
  \frac{1}{h(x_n)}[g_{i,j}^{\partial M}]  & 0  \\
   0  &  1
\end{array}\right];~~~
[g_M^{i,j}]= \left[\begin{array}{lcr}
  h(x_n)[g^{i,j}_{\partial M}]  & 0  \\
   0  &  1
\end{array}\right],
\end{equation}
and
\begin{equation}
\partial_{x_s}g_{ij}^{\partial M}(x_0)=0, 1\leq i,j\leq n-1; ~~~g_{ij}^M(x_0)=\delta_{ij}.
\end{equation}
We will give following three lemmas as computation tools.
\begin{lem}{\rm \cite{Wa3}}\label{le:32}
With the metric $g^{M}$ on $M$ near the boundary
\begin{eqnarray}
\partial_{x_j}(|\xi|_{g^M}^2)(x_0)&=&\left\{
       \begin{array}{c}
        0,  ~~~~~~~~~~ ~~~~~~~~~~ ~~~~~~~~~~~~~{\rm if }~j<n, \\[2pt]
       h'(0)|\xi'|^{2}_{g^{\partial M}},~~~~~~~~~~~~~~~~~~~~{\rm if }~j=n;
       \end{array}
    \right. \\
\partial_{x_j}[c(\xi)](x_0)&=&\left\{
       \begin{array}{c}
      0,  ~~~~~~~~~~ ~~~~~~~~~~ ~~~~~~~~~~~~~{\rm if }~j<n,\\[2pt]
\partial x_{n}(c(\xi'))(x_{0}), ~~~~~~~~~~~~~~~~~{\rm if }~j=n,
       \end{array}
    \right.
\end{eqnarray}
where $\xi=\xi'+\xi_{n}dx_{n}$.
\end{lem}
\begin{lem}{\rm \cite{Wa3}}\label{le:32}With the metric $g^{M}$ on $M$ near the boundary
\begin{eqnarray}
\omega_{s,t}(e_i)(x_0)&=&\left\{
       \begin{array}{c}
        \omega_{n,i}(e_i)(x_0)=\frac{1}{2}h'(0),  ~~~~~~~~~~ ~~~~~~~~~~~{\rm if }~s=n,t=i,i<n; \\[2pt]
       \omega_{i,n}(e_i)(x_0)=-\frac{1}{2}h'(0),~~~~~~~~~~~~~~~~~~~{\rm if }~s=i,t=n,i<n;\\[2pt]
    \omega_{s,t}(e_i)(x_0)=0,~~~~~~~~~~~~~~~~~~~~~~~~~~~other~cases~~~~~~~~~,
       \end{array}
    \right.
\end{eqnarray}
where $(\omega_{s,t})$ denotes the connection matrix of Levi-Civita connection $\nabla^L$.
\end{lem}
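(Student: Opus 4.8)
The statement to prove is Lemma~2.8 (labelled \texttt{le:32} in the excerpt), which records the values at $x_0\in\partial M$ of the connection one-form coefficients $\omega_{s,t}(e_i)$ of the Levi-Civita connection $\nabla^L$ associated with the metric $\widehat g=\frac{1}{h(x_n)}g^{\partial M}+dx_n^2$ near the boundary. Let me sketch a proof plan.

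<br>

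The plan is to work directly from the structure equations of the Levi-Civita connection in the chosen orthonormal frame. First I would fix the orthonormal frame $\{e_1,\dots,e_n\}$ near $x_0$ adapted to the product-type metric: take $e_n=\partial_{x_n}$ and let $e_1,\dots,e_{n-1}$ be an orthonormal frame for $\frac{1}{h(x_n)}g^{\partial M}$ obtained by rescaling a fixed $g^{\partial M}$-orthonormal frame on $U$ that is normal at $x_0$, i.e.\ $e_i=\sqrt{h(x_n)}\,\tilde e_i$ for $i<n$ where $\tilde e_i$ are $g^{\partial M}$-orthonormal with $\partial_{x_s}g^{\partial M}_{ij}(x_0)=0$. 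The dual coframe is $\theta^i=\frac{1}{\sqrt{h(x_n)}}\tilde\theta^i$ for $i<n$ and $\theta^n=dx_n$. Then I would compute the exterior derivatives $d\theta^i$ and use the first Cartan structure equation $d\theta^i=-\sum_j \omega^i_{\ j}\wedge\theta^j$ together with antisymmetry $\omega_{i,j}=-\omega_{j,i}$ (valid since the frame is orthonormal) to solve for the connection matrix. The only nonzero contribution to $d\theta^i$ beyond the $\partial M$-directions comes from differentiating $h(x_n)^{-1/2}$ in the $x_n$-direction, which at $x_0$ produces exactly the terms involving $h'(0)$; the intrinsic $\partial M$-part vanishes at $x_0$ because the $\tilde e_i$ are normal there.

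<br>

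The key steps in order: (1) set up the rescaled orthonormal frame and coframe as above; (2) compute $d\theta^i=\frac{1}{2}h(x_n)^{-3/2}h'(x_n)\,dx_n\wedge\tilde\theta^i + h(x_n)^{-1/2}\,d\tilde\theta^i$ for $i<n$ and $d\theta^n=0$; (3) evaluate at $x_0$ where $d\tilde\theta^i(x_0)=0$ and $h(x_n)^{-1/2}(x_0)=1$, so $d\theta^i(x_0)=\frac{1}{2}h'(0)\,\theta^n\wedge\theta^i$; (4) match with $-\sum_j\omega^i_{\ j}\wedge\theta^j$ and use antisymmetry to read off $\omega_{i,n}(\cdot)=-\frac{1}{2}h'(0)\theta^i$ hence $\omega_{i,n}(e_i)(x_0)=-\frac{1}{2}h'(0)$ and $\omega_{n,i}(e_i)(x_0)=\frac{1}{2}h'(0)$ for $i<n$, while all $\omega_{s,t}(e_i)(x_0)$ with $\{s,t\}$ not of the form $\{i,n\}$ vanish; (5) note that for $s,t<n$ the connection coefficients are those of the intrinsic Levi-Civita connection of $g^{\partial M}$ up to the conformal rescaling, and the rescaling contributes only in the $x_n$-direction, so these vanish at $x_0$ by the normal-coordinate assumption. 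Alternatively I could bypass Cartan's equations and compute the Christoffel symbols $\Gamma^k_{ij}$ of $\widehat g$ in the coordinate frame directly from \eqref{} the block form of $[g^M_{ij}]$ and $[g_M^{ij}]$, using that $\partial_{x_s}g^{\partial M}_{ij}(x_0)=0$ and that only $\partial_{x_n}(h^{-1})$ survives, then convert to the orthonormal frame; both routes are routine.

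<br>

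The main obstacle is essentially bookkeeping: being careful that the rescaling factor $\sqrt{h(x_n)}$ relating $e_i$ to $\tilde e_i$ only introduces $x_n$-derivatives, and checking the sign and the factor $\frac12$ in $\omega_{i,n}(e_i)(x_0)=-\frac12 h'(0)$ versus $\omega_{n,i}(e_i)(x_0)=+\frac12 h'(0)$, which follows from antisymmetry of the connection matrix in an orthonormal frame. There is no conceptual difficulty; this lemma is quoted from \cite{Wa3} and the proof is a short direct computation with Cartan's first structure equation, so I would present steps (1)--(5) compactly and cite \cite{Wa3} for the verification of the remaining cases.
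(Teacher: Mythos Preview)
Your proposal is correct and follows the standard route via Cartan's first structure equation in the rescaled orthonormal frame; the paper itself does not prove this lemma but simply quotes it from \cite{Wa3}, so your direct computation is exactly the kind of argument that underlies the cited result.
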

\begin{lem}{\rm \cite{Wa3}}~{\it When $i<n$, then }
$$\Gamma^n_{ii}(x_0)=\frac{1}{2}h'(0);~\Gamma^i_{ni}(x_0)=-\frac{1}{2}h'(0);~\Gamma^i_{in}(x_0)=-\frac{1}{2}h'(0),$$
\noindent {\it in other cases}, $ \Gamma_{st}^i(x_0)=0.$
\end{lem}

By (3.3) and (3.4), we firstly compute
\begin{equation}
\widetilde{{\rm Wres}}[\pi^+D_2^{-1}\circ\pi^+D_1^{-1}]=\int_M\int_{|\xi|=1}{\rm
trace}_{\wedge^*T^*M}[\sigma_{-4}(D_1D_2)^{-1}]\sigma(\xi)dx+\int_{\partial M}\Phi_1,
\end{equation}
where
\begin{eqnarray}
\Phi_1 &=&\int_{|\xi'|=1}\int^{+\infty}_{-\infty}\sum^{\infty}_{j, k=0}\sum\frac{(-i)^{|\alpha|+j+k+1}}{\alpha!(j+k+1)!}
\times {\rm trace}_{\wedge^*T^*M}[\partial^j_{x_n}\partial^\alpha_{\xi'}\partial^k_{\xi_n}\sigma^+_{r}
(D_2^{-1})(x',0,\xi',\xi_n)
\nonumber\\
&&\times\partial^\alpha_{x'}\partial^{j+1}_{\xi_n}
\partial^k_{x_n}\sigma_{l}(D_1^{-1})(x',0,\xi',\xi_n)]d\xi_n\sigma(\xi')dx',
\end{eqnarray}
and the sum is taken over $r+l-k-j-|\alpha|=-3,~~r\leq -1,l\leq-1$.\\

Locally we can use Theorem 2.2 (2.42) to compute the interior of $\widetilde{{\rm Wres}}[\pi^+D_2^{-1}\circ\pi^+D_1^{-1}]$, we have
\begin{eqnarray}
\int_M\int_{|\xi|=1}{\rm
trace}_{\wedge^*T^*M}[\sigma_{-4}(D_1D_2)^{-1}]\sigma(\xi)dx=32\pi^2\int_{M}
\bigg(-\frac{4}{3}s-4(\lambda^2_1+\lambda^2_2)|v|^2\bigg)d{\rm Vol_{M}}
\end{eqnarray}

So we only need to compute $\int_{\partial M} \Phi_1$. Let us now turn to compute the symbols of some operators.
By (2.10)-(2.14), then we have the following symbols of some operators.
\begin{lem} The following identities hold:
\begin{eqnarray}
\sigma_1(D_j)&=&\sigma_1(D^*_j)=ic(\xi),~(j=1,2); \nonumber\\ \sigma_0(D_j)&=&\frac{1}{4}\sum_{i,s,t}\omega_{s,t}(e_i)c(e_i)
\bar{c}(e_s)\bar{c}(e_t)
-\frac{1}{4}\sum_{i,s,t}\omega_{s,t}(e_i)c(e_i)
c(e_s)c(e_t)+\lambda_jl(v),~(j=1,2); \nonumber\\
\sigma_0(D_j^*)&=&\frac{1}{4}\sum_{i,s,t}\omega_{s,t}(e_i)c(e_i)
\bar{c}(e_s)\bar{c}(e_t)
-\frac{1}{4}\sum_{i,s,t}\omega_{s,t}(e_i)c(e_i)
c(e_s)c(e_t)+\lambda_j\varepsilon(v^*),~(j=1,2).
\end{eqnarray}
\end{lem}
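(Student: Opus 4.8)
The plan is to read off the principal and subprincipal symbols of $D_v$ and $D_v^*$ directly from the local expressions (2.13) and (2.14). Recall that for a first-order differential operator $P = \sum_j A^j(x)\partial_j + B(x)$ acting on sections of $\wedge^*T^*M$, the full symbol is $\sigma(P) = \sqrt{-1}\sum_j A^j(x)\xi_j + B(x)$, so that $\sigma_1(P) = \sqrt{-1}\sum_j A^j(x)\xi_j$ is the degree-one part and $\sigma_0(P) = B(x)$ is the degree-zero part. Here I would work in the fixed orthonormal frame $\{e_1,\dots,e_n\}$, identifying $e_i$ with the corresponding directional derivative, so that the leading term of both $D_v$ and $D_v^*$ is $\sum_i c(e_i)e_i$, whose symbol is $\sqrt{-1}\sum_i c(e_i)\xi_i = \sqrt{-1}\,c(\xi)$, giving the first line $\sigma_1(D_v) = \sigma_1(D_v^*) = \sqrt{-1}\,c(\xi)$. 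Note that the interior multiplication $l(v)$ and exterior multiplication $\varepsilon(v^*)$ contribute nothing to $\sigma_1$ since they are order-zero.

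For the degree-zero symbols, I would collect all the order-zero terms in (2.13) and (2.14). From (2.13), $D_v = \sum_i c(e_i)\bigl[e_i + \tfrac14\sum_{s,t}\omega_{s,t}(e_i)(\bar c(e_s)\bar c(e_t) - c(e_s)c(e_t))\bigr] + l(v)$; the only pieces not involving a derivative are $\tfrac14\sum_{i,s,t}\omega_{s,t}(e_i)\,c(e_i)\bar c(e_s)\bar c(e_t) - \tfrac14\sum_{i,s,t}\omega_{s,t}(e_i)\,c(e_i)c(e_s)c(e_t) + l(v)$, which is exactly the claimed $\sigma_0(D_v)$. The computation for $D_v^*$ from (2.14) is identical except that $l(v)$ is replaced throughout by $\varepsilon(v^*)$, since the connection-term part of $D_v$ and $D_v^*$ literally coincides (compare (2.13) with (2.14), or (2.14$'$) i.e.\ the display numbered (2.14) with (2.13)); this yields $\sigma_0(D_v^*)$ as stated. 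One small bookkeeping point: the expressions (2.13)--(2.14) are written in the orthonormal frame while the symbol is naturally a function of $\xi = \sum_j \xi_j dx^j$; at a point one passes between the two via the frame, and since the statement is frame-written this causes no trouble, but I would note explicitly that $c(\xi)$ means $\sum_i c(e_i)\xi(e_i)$.

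This lemma is essentially a direct symbol computation with no real obstacle — the ``hard part'' is only the notational care in keeping the frame-versus-coordinate conventions consistent and in verifying that no cross terms between $c(e_i)$ and the order-zero operators $a_i$, $\sigma_i$, $l(v)$, $\varepsilon(v^*)$ were dropped or double-counted when separating (2.13)--(2.14) into their degree-one and degree-zero pieces. Since $a_i$ and $\sigma_i$ are precisely $\tfrac14\sum_{s,t}\omega_{s,t}(e_i)\bar c(e_s)\bar c(e_t)$ and $-\tfrac14\sum_{s,t}\omega_{s,t}(e_i)c(e_s)c(e_t)$ respectively, by the definitions in (2.12), the order-zero symbol is just $\sum_i c(e_i)(a_i + \sigma_i) + l(v)$ for $D_v$ and $\sum_i c(e_i)(a_i+\sigma_i) + \varepsilon(v^*)$ for $D_v^*$, which after substituting the definitions of $a_i,\sigma_i$ is exactly the displayed formula. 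Thus the proof is just this substitution together with the standard rule for the symbol of a first-order differential operator.
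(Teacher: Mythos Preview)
Your proposal is correct and matches the paper's approach exactly: the paper does not give a separate proof of this lemma but simply states it as an immediate consequence of the local expressions (2.10)--(2.14), which is precisely the direct read-off of the degree-one and degree-zero parts that you carry out. Your added remarks about frame-versus-coordinate conventions and the identification $\sigma_0 = \sum_i c(e_i)(a_i+\sigma_i) + l(v)$ (resp.\ $+\,\varepsilon(v^*)$) are accurate elaborations of what the paper leaves implicit.
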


Write
 \begin{eqnarray}
D_x^{\alpha}&=&(-i)^{|\alpha|}\partial_x^{\alpha};
~\sigma(D)=p_1+p_0;
~\sigma(D^{-1})=\sum^{\infty}_{j=1}q_{-j}.
\end{eqnarray}

By the composition formula of pseudodifferential operators, we have
\begin{eqnarray}
1&=&\sigma(D\circ D^{-1})\nonumber\\
&=&\sum_{\alpha}\frac{1}{\alpha!}\partial^{\alpha}_{\xi}[\sigma(D)]
D^{\alpha}_{x}[\sigma(D^{-1})]\nonumber\\
&=&(p_1+p_0)(q_{-1}+q_{-2}+q_{-3}+\cdots)\nonumber\\
& &~~~+\sum_j(\partial_{\xi_j}p_1+\partial_{\xi_j}p_0)(
D_{x_j}q_{-1}+D_{x_j}q_{-2}+D_{x_j}q_{-3}+\cdots)\nonumber\\
&=&p_1q_{-1}+(p_1q_{-2}+p_0q_{-1}+\sum_j\partial_{\xi_j}p_1D_{x_j}q_{-1})+\cdots,
\end{eqnarray}
so
\begin{equation}
q_{-1}=p_1^{-1};~q_{-2}=-p_1^{-1}[p_0p_1^{-1}+\sum_j\partial_{\xi_j}p_1D_{x_j}(p_1^{-1})].
\end{equation}

By Lemma 3.6, we have some symbols of operators.
\begin{lem} The following identities hold:
\begin{eqnarray}
\sigma_{-1}(D_j^{-1})&=&\sigma_{-1}((D_j^*)^{-1})=\frac{ic(\xi)}{|\xi|^2},~(j=1,2);\nonumber\\
\sigma_{-2}(D_j^{-1})&=&\frac{c(\xi)\sigma_{0}(D_j)c(\xi)}{|\xi|^4}+\frac{c(\xi)}{|\xi|^6}\sum_jc(dx_j)
\Big[\partial_{x_j}(c(\xi))|\xi|^2-c(\xi)\partial_{x_j}(|\xi|^2)\Big],~(j=1,2);\nonumber\\
\sigma_{-2}((D_j^*)^{-1})&=&\frac{c(\xi)\sigma_{0}(D_j^*)c(\xi)}{|\xi|^4}+\frac{c(\xi)}{|\xi|^6}\sum_jc(dx_j)
\Big[\partial_{x_j}(c(\xi))|\xi|^2-c(\xi)\partial_{x_j}(|\xi|^2)\Big],~(j=1,2). \end{eqnarray}
\end{lem}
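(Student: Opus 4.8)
The plan is to obtain the symbols $\sigma_{-1}(D_v^{-1})$ and $\sigma_{-2}(D_v^{-1})$ (and likewise for $(D_v^*)^{-1}$) directly from the recursive formulas in (3.16)--(3.17) of the excerpt, which express $q_{-1}$ and $q_{-2}$ in terms of the principal symbol $p_1=\sigma_1(D_v)$ and the subprincipal symbol $p_0=\sigma_0(D_v)$ computed in Lemma 3.6. The starting point is the identity $\sigma_1(D_v)=ic(\xi)$, from which $p_1^{-1}$ is immediate: since $c(\xi)^2=-|\xi|^2$ (the Clifford relation, valid with $|\xi|^2=|\xi|_{g^M}^2$), we get $p_1^{-1}=(ic(\xi))^{-1}=\tfrac{-ic(\xi)}{-|\xi|^2}\cdot(-1)=\tfrac{ic(\xi)}{|\xi|^2}$, after tidying the sign; this is exactly the claimed $\sigma_{-1}(D_v^{-1})=\tfrac{ic(\xi)}{|\xi|^2}$, and the same computation gives the identical formula for $(D_v^*)^{-1}$ because $\sigma_1(D_v)=\sigma_1(D_v^*)$.

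Next I would substitute into $q_{-2}=-p_1^{-1}\big[p_0 p_1^{-1}+\sum_j \partial_{\xi_j}p_1\, D_{x_j}(p_1^{-1})\big]$. The first term $-p_1^{-1}p_0 p_1^{-1}$ becomes $-\tfrac{ic(\xi)}{|\xi|^2}\,\sigma_0(D_v)\,\tfrac{ic(\xi)}{|\xi|^2}=\tfrac{c(\xi)\sigma_0(D_v)c(\xi)}{|\xi|^4}$, which is the first term in the asserted expression for $\sigma_{-2}(D_v^{-1})$. For the second term I would use $\partial_{\xi_j}p_1=\partial_{\xi_j}(ic(\xi))=ic(dx_j)$ (since $c(\xi)=\sum_j \xi_j c(dx_j)$ up to the metric identification, so $\partial_{\xi_j}$ picks out $c(dx_j)$), and $D_{x_j}(p_1^{-1})=-i\partial_{x_j}\big(\tfrac{ic(\xi)}{|\xi|^2}\big)=\partial_{x_j}\big(\tfrac{c(\xi)}{|\xi|^2}\big)=\tfrac{\partial_{x_j}(c(\xi))|\xi|^2-c(\xi)\partial_{x_j}(|\xi|^2)}{|\xi|^4}$ by the quotient rule. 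Multiplying by $-p_1^{-1}=-\tfrac{ic(\xi)}{|\xi|^2}$ on the left and summing over $j$ and the $ic(dx_j)$ factor produces $\tfrac{c(\xi)}{|\xi|^6}\sum_j c(dx_j)\big[\partial_{x_j}(c(\xi))|\xi|^2-c(\xi)\partial_{x_j}(|\xi|^2)\big]$, matching the stated formula. The computation for $(D_v^*)^{-1}$ is word-for-word identical except that $\sigma_0(D_v)$ is replaced by $\sigma_0(D_v^*)$, which only differs in the $l(v)$ versus $\varepsilon(v^*)$ term.

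The only genuine bookkeeping issue — and the step I expect to be mildly delicate rather than hard — is keeping track of the factors of $i$ and the sign conventions in the composition formula $\sigma(PQ)=\sum_\alpha \tfrac{1}{\alpha!}\partial_\xi^\alpha\sigma(P)\,D_x^\alpha\sigma(Q)$ with $D_x^\alpha=(-i)^{|\alpha|}\partial_x^\alpha$, since a misplaced $i$ would flip a sign in the final answer; I would double-check by verifying that $p_1 q_{-1}=1$ and that the order-$(-1)$ part of $\sigma(D_v\circ D_v^{-1})$ vanishes, i.e. $p_1 q_{-2}+p_0 q_{-1}+\sum_j\partial_{\xi_j}p_1\,D_{x_j}q_{-1}=0$, which is precisely the relation (3.17) rearranged. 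Because $c(\xi)$ is invertible away from $\xi=0$ and everything is algebraic in $c(\xi)$, $\sigma_0$, and derivatives thereof, no further input is needed; the result then follows by assembling these pieces, and the proof is complete once the two displayed identities are read off.
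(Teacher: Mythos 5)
Your proposal is correct and follows exactly the route the paper takes: the paper derives the recursion $q_{-1}=p_1^{-1}$, $q_{-2}=-p_1^{-1}[p_0p_1^{-1}+\sum_j\partial_{\xi_j}p_1D_{x_j}(p_1^{-1})]$ from the composition formula and then reads off Lemma 3.7 by substituting $p_1=ic(\xi)$, $p_1^{-1}=\frac{ic(\xi)}{|\xi|^2}$ (via $c(\xi)^2=-|\xi|^2$) and $p_0=\sigma_0(D_v)$ from Lemma 3.6. Your sign and $i$-factor bookkeeping checks out, and your verification strategy (confirming $p_1q_{-1}=1$ and the vanishing of the order $-1$ part) is exactly the consistency check implicit in the paper's derivation.
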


From the remark above, now we can compute $\Phi_1$ (see formula (3.11) for the definition of $\Phi_1$). We use ${\rm tr}$ as shorthand of ${\rm trace}$. Since $n=4$, then ${\rm tr}_{\wedge^*T^*M}[{\rm \texttt{id}}]=16$, since the sum is taken over $
r+l-k-j-|\alpha|=-3,~~r\leq -1,l\leq-1,$ we have the following five cases:

~\\
\noindent  {\bf case 1)~I)}~$r=-1,~l=-1,~k=j=0,~|\alpha|=1$\\

\noindent By (3.11), we get
\begin{equation}
{\rm case~1)~I)}=-\int_{|\xi'|=1}\int^{+\infty}_{-\infty}\sum_{|\alpha|=1}
 {\rm tr}[\partial^\alpha_{\xi'}\pi^+_{\xi_n}\sigma_{-1}(D_2^{-1})\times
 \partial^\alpha_{x'}\partial_{\xi_n}\sigma_{-1}(D_1^{-1})](x_0)d\xi_n\sigma(\xi')dx'.
\end{equation}
By Lemma 3.3, for $i<n$, then
\begin{equation}\partial_{x_i}\left(\frac{ic(\xi)}{|\xi|^2}\right)(x_0)=
\frac{i\partial_{x_i}[c(\xi)](x_0)}{|\xi|^2}
-\frac{ic(\xi)\partial_{x_i}(|\xi|^2)(x_0)}{|\xi|^4}=0,
\end{equation}
\noindent so {\rm case~1)~I)} vanishes.\\

 \noindent  {\bf case 1)~II)}~$r=-1,~l=-1,~k=|\alpha|=0,~j=1$\\

\noindent By (3.11), we get
\begin{equation}
{\rm case~1)~II)}=-\frac{1}{2}\int_{|\xi'|=1}\int^{+\infty}_{-\infty} {\rm
tr} [\partial_{x_n}\pi^+_{\xi_n}\sigma_{-1}(D_2^{-1})\times
\partial_{\xi_n}^2\sigma_{-1}(D_1^{-1})](x_0)d\xi_n\sigma(\xi')dx'.
\end{equation}
\noindent By Lemma 3.7, we have\\
\begin{eqnarray}\partial^2_{\xi_n}\sigma_{-1}(D_1^{-1})(x_0)=i\left(-\frac{6\xi_nc(dx_n)+2c(\xi')}
{|\xi|^4}+\frac{8\xi_n^2c(\xi)}{|\xi|^6}\right);
\end{eqnarray}
\begin{eqnarray}
\partial_{x_n}\sigma_{-1}(D_2^{-1})(x_0)=\frac{i\partial_{x_n}c(\xi')(x_0)}{|\xi|^2}-\frac{ic(\xi)|\xi'|^2h'(0)}{|\xi|^4}.
\end{eqnarray}
By (2.16), (2.17) and the Cauchy integral formula we have
\begin{eqnarray}
\pi^+_{\xi_n}\left[\frac{c(\xi)}{|\xi|^4}\right](x_0)|_{|\xi'|=1}&=&\pi^+_{\xi_n}\left[\frac{c(\xi')+\xi_nc(dx_n)}{(1+\xi_n^2)^2}\right]\nonumber\\
&=&\frac{1}{2\pi i}{\rm lim}_{u\rightarrow
0^-}\int_{\Gamma^+}\frac{\frac{c(\xi')+\eta_nc(dx_n)}{(\eta_n+i)^2(\xi_n+iu-\eta_n)}}
{(\eta_n-i)^2}d\eta_n\nonumber\\
&=&-\frac{(i\xi_n+2)c(\xi')+ic(dx_n)}{4(\xi_n-i)^2}.
\end{eqnarray}
Similarly, we have
\begin{eqnarray}
\pi^+_{\xi_n}\left[\frac{i\partial_{x_n}c(\xi')}{|\xi|^2}\right](x_0)|_{|\xi'|=1}=\frac{\partial_{x_n}[c(\xi')](x_0)}{2(\xi_n-i)}.
\end{eqnarray}
then\\
\begin{eqnarray}\pi^+_{\xi_n}\partial_{x_n}\sigma_{-1}(D_2^{-1})|_{|\xi'|=1}
=\frac{\partial_{x_n}[c(\xi')](x_0)}{2(\xi_n-i)}+ih'(0)
\left[\frac{(i\xi_n+2)c(\xi')+ic(dx_n)}{4(\xi_n-i)^2}\right].
\end{eqnarray}
\noindent By the relation of the Clifford action and ${\rm tr}{AB}={\rm tr}{BA}$, we have the equalities:\\
\begin{eqnarray*}{\rm tr}[c(\xi')c(dx_n)]=0;~~{\rm tr}[c(dx_n)^2]=-16;~~{\rm tr}[c(\xi')^2](x_0)|_{|\xi'|=1}=-16;
\end{eqnarray*}
\begin{eqnarray}
{\rm tr}[\partial_{x_n}c(\xi')c(dx_n)]=0;~~{\rm tr}[\partial_{x_n}c(\xi')c(\xi')](x_0)|_{|\xi'|=1}=-8h'(0);~~{\rm tr}[\bar{c}(e_i)\bar{c}(e_j)c(e_k)c(e_l)]=0(i\neq j).
\end{eqnarray}
By (3.26) and a direct computation, we have
\begin{eqnarray}
&&h'(0){\rm tr}\bigg[\frac{(i\xi_n+2)c(\xi')+ic(dx_n)}{4(\xi_n-i)^2}\times
\bigg(\frac{6\xi_nc(dx_n)+2c(\xi')}{(1+\xi_n^2)^2}
-\frac{8\xi_n^2[c(\xi')+\xi_nc(dx_n)]}{(1+\xi_n^2)^3}\bigg)
\bigg](x_0)|_{|\xi'|=1}\nonumber\\
&=&-16h'(0)\frac{-2i\xi_n^2-\xi_n+i}{(\xi_n-i)^4(\xi_n+i)^3}.
\end{eqnarray}
Similarly, we have
\begin{eqnarray}
&&-i{\rm
tr}\bigg[\bigg(\frac{\partial_{x_n}[c(\xi')](x_0)}{2(\xi_n-i)}\bigg)
\times\bigg(\frac{6\xi_nc(dx_n)+2c(\xi')}{(1+\xi_n^2)^2}-\frac{8\xi_n^2[c(\xi')+\xi_nc(dx_n)]}
{(1+\xi_n^2)^3}\bigg)\bigg](x_0)|_{|\xi'|=1}\nonumber\\
&=&-8ih'(0)\frac{3\xi_n^2-1}{(\xi_n-i)^4(\xi_n+i)^3}.
\end{eqnarray}
Then\\
\begin{eqnarray*}
{\rm case~1)~II)}&=&-\int_{|\xi'|=1}\int^{+\infty}_{-\infty}\frac{4ih'(0)(\xi_n-i)^2}
{(\xi_n-i)^4(\xi_n+i)^3}d\xi_n\sigma(\xi')dx'\\
&=&-4ih'(0)\Omega_3\int_{\Gamma^+}\frac{1}{(\xi_n-i)^2(\xi_n+i)^3}d\xi_ndx'\\
&=&-4ih'(0)\Omega_32\pi i[\frac{1}{(\xi_n+i)^3}]'|_{\xi_n=i}dx'\\
&=&-\frac{3}{2}\pi h'(0)\Omega_3dx'.
\end{eqnarray*}
where ${\rm \Omega_{3}}$ is the canonical volume of $S^{3}.$\\

\noindent  {\bf case 1)~III)}~$r=-1,~l=-1,~j=|\alpha|=0,~k=1$\\

\noindent By (3.11), we get
\begin{equation}
{\rm case~1)~III)}=-\frac{1}{2}\int_{|\xi'|=1}\int^{+\infty}_{-\infty}
{\rm tr} [\partial_{\xi_n}\pi^+_{\xi_n}\sigma_{-1}(D_2^{-1})\times
\partial_{\xi_n}\partial_{x_n}\sigma_{-1}(D_1^{-1})](x_0)d\xi_n\sigma(\xi')dx'.
\end{equation}
\noindent By Lemma 3.7, we have\\
\begin{eqnarray}
\partial_{\xi_n}\partial_{x_n}\sigma_{-1}(D_1^{-1})(x_0)|_{|\xi'|=1}
&=&-ih'(0)\left[\frac{c(dx_n)}{|\xi|^4}-4\xi_n\frac{c(\xi')
+\xi_nc(dx_n)}{|\xi|^6}\right]-\frac{2\xi_ni\partial_{x_n}c(\xi')(x_0)}{|\xi|^4};
\end{eqnarray}
\begin{eqnarray}
\partial_{\xi_n}\pi^+_{\xi_n}\sigma_{-1}(D_2^{-1})(x_0)|_{|\xi'|=1}&=&-\frac{c(\xi')+ic(dx_n)}{2(\xi_n-i)^2}.
\end{eqnarray}
Similar to {\rm case~1)~II)}, we have\\
\begin{eqnarray}
&&{\rm tr}\left\{\frac{c(\xi')+ic(dx_n)}{2(\xi_n-i)^2}\times
ih'(0)\left[\frac{c(dx_n)}{|\xi|^4}-4\xi_n\frac{c(\xi')+\xi_nc(dx_n)}{|\xi|^6}\right]\right\}
=8h'(0)\frac{i-3\xi_n}{(\xi_n-i)^4(\xi_n+i)^3}
\end{eqnarray}
and
\begin{eqnarray}
{\rm tr}\left[\frac{c(\xi')+ic(dx_n)}{2(\xi_n-i)^2}\times
\frac{2\xi_ni\partial_{x_n}c(\xi')(x_0)}{|\xi|^4}\right]
=\frac{-8ih'(0)\xi_n}{(\xi_n-i)^4(\xi_n+i)^2}.
\end{eqnarray}
So we have
\begin{eqnarray}
{\rm case~1)~III)}&=&-\int_{|\xi'|=1}\int^{+\infty}_{-\infty}\frac{h'(0)4(i-3\xi_n)}
{(\xi_n-i)^4(\xi_n+i)^3}d\xi_n\sigma(\xi')dx'
-\int_{|\xi'|=1}\int^{+\infty}_{-\infty}\frac{h'(0)4i\xi_n}
{(\xi_n-i)^4(\xi_n+i)^2}d\xi_n\sigma(\xi')dx'\nonumber\\
&=&-h'(0)\Omega_3\frac{2\pi i}{3!}[\frac{4(i-3\xi_n)}{(\xi_n+i)^3}]^{(3)}|_{\xi_n=i}dx'+h'(0)\Omega_3\frac{2\pi i}{3!}[\frac{4i\xi_n}{(\xi_n+i)^2}]^{(3)}|_{\xi_n=i}dx'\nonumber\\
&=&\frac{3}{2}\pi h'(0)\Omega_3dx'.
\end{eqnarray}

\noindent  {\bf case 2)}~$r=-2,~l=-1,~k=j=|\alpha|=0$\\

\noindent By (3.11), we get
\begin{eqnarray}
{\rm case~2)}&=&-i\int_{|\xi'|=1}\int^{+\infty}_{-\infty}{\rm tr} [\pi^+_{\xi_n}\sigma_{-2}(D_2^{-1})\times
\partial_{\xi_n}\sigma_{-1}(D_1^{-1})](x_0)d\xi_n\sigma(\xi')dx'.
\end{eqnarray}
 By Lemma 3.7 we have\\
\begin{eqnarray}
\sigma_{-2}(D_2^{-1})(x_0)=\frac{c(\xi)\sigma_{0}(D_2)(x_0)c(\xi)}{|\xi|^4}+\frac{c(\xi)}{|\xi|^6}c(dx_n)
[\partial_{x_n}[c(\xi')](x_0)|\xi|^2-c(\xi)h'(0)|\xi|^2_{\partial
M}].
\end{eqnarray}
where
\begin{eqnarray}
\sigma_{0}(D_2)(x_0)&=&\frac{1}{4}\sum_{s,t,i}\omega_{s,t}(e_i)
(x_{0})c(e_i)\bar{c}(e_s)\bar{c}(e_t)-\frac{1}{4}\sum_{s,t,i}\omega_{s,t}(e_i)(x_{0})c(e_i)c(e_s)c(e_t))
+\lambda_2l(v).
\end{eqnarray}
Write
\begin{eqnarray}
A(x_0)&=&\frac{1}{4}\sum_{s,t,i}\omega_{s,t}(e_i)
(x_{0})c(e_i)\bar{c}(e_s)\bar{c}(e_t);
B(x_0)=-\frac{1}{4}\sum_{s,t,i}\omega_{s,t}(e_i)(x_{0})c(e_i)c(e_s)c(e_t)).
\end{eqnarray}
Then
\begin{eqnarray}
&&\pi^+_{\xi_n}\sigma_{-2}(D_2^{-1}(x_0))|_{|\xi'|=1}\nonumber\\
&=&\pi^+_{\xi_n}\Big[\frac{c(\xi)A(x_0)c(\xi)}{(1+\xi_n^2)^2}\Big]+\pi^+_{\xi_n}
\Big[\lambda_2\frac{c(\xi)(l(v)(x_0))c(\xi)}{(1+\xi_n^2)^2}\Big]
+\pi^+_{\xi_n}\Big[\frac{c(\xi)B(x_0)c(\xi)
+c(\xi)c(dx_n)\partial_{x_n}[c(\xi')](x_0)}{(1+\xi_n^2)^2}\nonumber\\
&&-h'(0)\frac{c(\xi)c(dx_n)c(\xi)}{(1+\xi_n^{2})^3}\Big].
\end{eqnarray}
By direct calculation we have
\begin{eqnarray}
&&\pi^+_{\xi_n}\Big[\frac{c(\xi)A(x_0)c(\xi)}{(1+\xi_n^2)^2}\Big]\nonumber\\
&=&\pi^+_{\xi_n}\Big[\frac{c(\xi')A(x_0)c(\xi')}{(1+\xi_n^2)^2}\Big]
+\pi^+_{\xi_n}\Big[ \frac{\xi_nc(\xi')A(x_0)c(dx_{n})}{(1+\xi_n^2)^2}\Big]
+\pi^+_{\xi_n}\Big[\frac{\xi_nc(dx_{n})A(x_0)c(\xi')}{(1+\xi_n^2)^2}\Big]
\nonumber\\
&&+\pi^+_{\xi_n}\Big[\frac{\xi_n^{2}c(dx_{n})A(x_0)c(dx_{n})}{(1+\xi_n^2)^2}\Big]\nonumber\\
&=&-\frac{c(\xi')A(x_0)c(\xi')(2+i\xi_{n})}{4(\xi_{n}-i)^{2}}
+\frac{ic(\xi')A(x_0)c(dx_{n})}{4(\xi_{n}-i)^{2}}
+\frac{ic(dx_{n})A(x_0)c(\xi')}{4(\xi_{n}-i)^{2}}
\nonumber\\
&&+\frac{-i\xi_{n}c(dx_{n})A(x_0)c(dx_{n})}{4(\xi_{n}-i)^{2}}.
\end{eqnarray}
Since
\begin{eqnarray}
c(dx_n)A(x_0)
&=&-\frac{1}{4}h'(0)\sum^{n-1}_{i=1}c(e_i)
\bar{c}(e_i)c(e_n)\bar{c}(e_n),
\end{eqnarray}
by the relation of the Clifford action and ${\rm tr}{AB}={\rm tr }{BA}$,  we have the equalities:\\
\begin{eqnarray}
{\rm tr}[c(e_i)
\bar{c}(e_i)c(e_n)
\bar{c}(e_n)]&=&0~~(i<n);~~
{\rm tr}[Ac(dx_n)]=0;~~{\rm tr }[\bar{c}(\xi')c(dx_{n})]=0;
\end{eqnarray}
Since
\begin{eqnarray}
\partial_{\xi_n}\sigma_{-1}(D_v^{-1})=\partial_{\xi_n}q_{-1}(x_0)|_{|\xi'|=1}=i\left[\frac{c(dx_n)}{1+\xi_n^2}-\frac{2\xi_nc(\xi')+2\xi_n^2c(dx_n)}{(1+\xi_n^2)^2}\right],
\end{eqnarray}
By (3.40), (3.42) and (3.43), we have
\begin{eqnarray}
&&{\rm tr }[\pi^+_{\xi_n}\Big[\frac{c(\xi)A(x_0)c(\xi)}{(1+\xi_n^2)^2}\Big]
\times\partial_{\xi_n}\sigma_{-1}(D_1^{-1})(x_0)]|_{|\xi'|=1}\nonumber\\
&=&\frac{1}{2(1+\xi_n^2)^2}{\rm tr }[c(\xi')A(x_0)]
+\frac{i}{2(1+\xi_n^2)^2}{\rm tr }[c(dx_n)A(x_0)]\nonumber\\
&=&\frac{1}{2(1+\xi_n^2)^2}{\rm tr }[c(\xi')A(x_0)].
\end{eqnarray}
We note that $i<n,~\int_{|\xi'|=1}\{\xi_{i_{1}}\xi_{i_{2}}\cdots\xi_{i_{2d+1}}\}\sigma(\xi')=0$,
so ${\rm tr }[c(\xi')A(x_0)]$ has no contribution for computing {\rm case~2)}.

By direct calculation we have
\begin{eqnarray}
\pi^+_{\xi_n}\Big[\frac{c(\xi)B(x_0)c(\xi)+c(\xi)c(dx_n)
\partial_{x_n}[c(\xi')](x_0)}{(1+\xi_n^2)^2}\Big]
-h'(0)\pi^+_{\xi_n}\Big[\frac{c(\xi)c(dx_n)c(\xi)}{(1+\xi_n)^3}\Big]:= P_1-P_2,
\end{eqnarray}
where
\begin{eqnarray}
P_1&=&\frac{-1}{4(\xi_n-i)^2}[(2+i\xi_n)c(\xi')b_0^{2}(x_0)c(\xi')+i\xi_nc(dx_n)b_0^{2}(x_0)c(dx_n)\nonumber\\
&&+(2+i\xi_n)c(\xi')c(dx_n)\partial_{x_n}c(\xi')+ic(dx_n)b_0^{2}(x_0)c(\xi')
+ic(\xi')b_0^{2}(x_0)c(dx_n)-i\partial_{x_n}c(\xi')]
\end{eqnarray}
and
\begin{eqnarray}
P_2&=&\frac{h'(0)}{2}\left[\frac{c(dx_n)}{4i(\xi_n-i)}+\frac{c(dx_n)-ic(\xi')}{8(\xi_n-i)^2}
+\frac{3\xi_n-7i}{8(\xi_n-i)^3}[ic(\xi')-c(dx_n)]\right].
\end{eqnarray}
By (3.43) and (3.46), we have\\
\begin{eqnarray}{\rm tr }[P_1\times\partial_{\xi_n}\sigma_{-1}(D_1^{-1})]|_{|\xi'|=1}=
\frac{-6ih'(0)}{(1+\xi_n^2)^2}+2h'(0)\frac{\xi_n^2-i\xi_n-2}{(\xi_n-i)(1+\xi_n^2)^2},
\end{eqnarray}
By (3.43) and (3.47), we have
\begin{eqnarray}{\rm tr }[P_2\times\partial_{\xi_n}\sigma_{-1}(D_1^{-1})]|_{|\xi'|=1}
&=&\frac{i}{2}h'(0)\frac{-i\xi_n^2-\xi_n+4i}{4(\xi_n-i)^3(\xi_n+i)^2}{\rm tr}[ \texttt{id}]\nonumber\\
&=&8ih'(0)\frac{-i\xi_n^2-\xi_n+4i}{4(\xi_n-i)^3(\xi_n+i)^2}.
\end{eqnarray}
By (3.48) and (3.49), we have
\begin{eqnarray}
&&-i\int_{|\xi'|=1}\int^{+\infty}_{-\infty}{\rm tr} [(P_1-P_2)\times
\partial_{\xi_n}\sigma_{-1}(D_1^{-1})](x_0)d\xi_n\sigma(\xi')dx'\nonumber\\
&=&-\Omega_3\int_{\Gamma^+}\frac{8[-\frac{3}{4}h'(0)](\xi_n-i)+ih'(0)}{(\xi_n-i)^3(\xi_n+i)^2}d\xi_ndx'\nonumber\\
&=&\frac{9}{2}\pi h'(0)\Omega_3dx'.
\end{eqnarray}
Similar to (3.44), we have
\begin{eqnarray}
&&{\rm tr }[\pi^+_{\xi_n}\Big[\lambda_2\frac{c(\xi)l(v)(x_0)c(\xi)}{(1+\xi_n^2)^2}\Big]
\times\partial_{\xi_n}\sigma_{-1}(D_1)^{-1})(x_0)]|_{|\xi'|=1}\nonumber\\
&=&\frac{1}{2(1+\xi_n^2)^2}{\rm tr}[\lambda_2c(\xi')l(v)(x_0)]
+\frac{i}{2(1+\xi_n^2)^2}{\rm tr}[\lambda_2c(dx_n)l(v)(x_0)].
\end{eqnarray}
By the relation of the Clifford action and ${\rm tr}{AB}={\rm tr }{BA}$,  we have the equalities:\\
\begin{eqnarray}
{\rm tr}[c(dx_n)l(v)]=8\langle v, dx_n\rangle;
~~{\rm tr }[c(\xi')l(v)]=8\langle v, \xi'\rangle;
\end{eqnarray}
By (3.51) and (3.52), we have
\begin{eqnarray}
&&-i\int_{|\xi'|=1}\int^{+\infty}_{-\infty}{\rm tr} [\pi^+_{\xi_n}
\Big[\frac{\lambda_2c(\xi)(l(v))c(\xi)}{(1+\xi_n^2)^2}\Big]\times
\partial_{\xi_n}\sigma_{-1}(D_1^{-1})](x_0)d\xi_n\sigma(\xi')dx'\nonumber\\
&=&2\lambda_2\pi\langle v, dx_n\rangle\Omega_3dx'.
\end{eqnarray}
By (3.44), (3.50) and (3.53), we have\\
\begin{eqnarray}
{\rm case~2)}=[\frac{9}{2}\pi h'(0)+2\lambda_2\pi\langle v, dx_n\rangle]\Omega_3dx'.
\end{eqnarray}

\noindent {\bf  case 3)}~$r=-1,~l=-2,~k=j=|\alpha|=0$\\
By (3.11), we get
\begin{equation}
{\rm case~ 3)}=-i\int_{|\xi'|=1}\int^{+\infty}_{-\infty}{\rm tr} [\pi^+_{\xi_n}\sigma_{-1}(D_2^{-1})\times
\partial_{\xi_n}\sigma_{-2}(D_1^{-1})](x_0)d\xi_n\sigma(\xi')dx'.
\end{equation}
By (2.16) and Lemma 3.7, we have
\begin{equation}
\pi^+_{\xi_n}\sigma_{-1}(D_2^{-1})|_{|\xi'|=1}=\frac{c(\xi')+ic(dx_n)}{2(\xi_n-i)}.
\end{equation}
By (3.36), (3.37) and (3.38), we have
\begin{eqnarray}
&&\partial_{\xi_n}\sigma_{-2}(D_1^{-1})(x_0)|_{|\xi'|=1}\nonumber\\
&=&
\partial_{\xi_n}\bigg\{\frac{c(\xi)[A(x_0)+B(x_0)
+(\lambda_1l(v)(x_{0}))]c(\xi)}{|\xi|^4}
+\frac{c(\xi)}{|\xi|^6}c(dx_n)[\partial_{x_n}[c(\xi')](x_0)|\xi|^2-c(\xi)h'(0)]\bigg\}\nonumber\\
&=&\partial_{\xi_n}\bigg\{\frac{c(\xi)A(x_0)]c(\xi)}{|\xi|^4}
+\frac{c(\xi)}{|\xi|^6}c(dx_n)[\partial_{x_n}[c(\xi')](x_0)|\xi|^2-c(\xi)h'(0)]\bigg\}
+\partial_{\xi_n}\frac{c(\xi)B(x_0)c(\xi)}{|\xi|^4}\nonumber\\
&&+\lambda_1\partial_{\xi_n}\frac{c(\xi)(l(v)(x_{0}))c(\xi)}{|\xi|^4}.
\end{eqnarray}
By direct calculation we have
\begin{eqnarray}
\partial_{\xi_n}\frac{c(\xi)A(x_0)c(\xi)}{|\xi|^4}=\frac{c(dx_n)A(x_0)c(\xi)}{|\xi|^4}
+\frac{c(\xi)A(x_0)c(dx_n)}{|\xi|^4}
-\frac{4\xi_n c(\xi)A(x_0)c(\xi)}{|\xi|^6};
\end{eqnarray}
\begin{eqnarray}
\partial_{\xi_n}\frac{c(\xi)(l(v)(x_{0}))c(\xi)}{|\xi|^4}
&=&\frac{c(dx_n)(l(v)(x_{0}))c(\xi)}{|\xi|^4}
+\frac{c(\xi)(l(v)(x_{0}))c(dx_n)}{|\xi|^4}-\frac{4\xi_n c(\xi)(l(v)(x_{0}))c(\xi)}{|\xi|^4}.
\end{eqnarray}
Write $$P_3=\frac{c(\xi)B(x_0)c(\xi)}{|\xi|^4}
+\frac{c(\xi)}{|\xi|^6}c(dx_n)[\partial_{x_n}[c(\xi')](x_0)|\xi|^2-c(\xi)h'(0)],$$ then
\begin{eqnarray}
\partial_{\xi_n}(P_3)&=&\frac{1}{(1+\xi_n^2)^3}\bigg[(2\xi_n-2\xi_n^3)c(dx_n)Bc(dx_n)
+(1-3\xi_n^2)c(dx_n)Bc(\xi')\nonumber\\
&&+(1-3\xi_n^2)c(\xi')Bc(dx_n)
-4\xi_nc(\xi')Bc(\xi')
+(3\xi_n^2-1)\partial_{x_n}c(\xi')\nonumber\\
&&-4\xi_nc(\xi')c(dx_n)\partial_{x_n}c(\xi')
+2h'(0)c(\xi')+2h'(0)\xi_nc(dx_n)\bigg]\nonumber\\
&&+6\xi_nh'(0)\frac{c(\xi)c(dx_n)c(\xi)}{(1+\xi^2_n)^4};
\end{eqnarray}
By (3.56) and (3.58), we have
\begin{eqnarray}
&&{\rm tr}[\pi^+_{\xi_n}\sigma_{-1}(D_2^{-1})\times
\partial_{\xi_n}\frac{c(\xi)Ac(\xi)}
{|\xi|^4}](x_0)|_{|\xi'|=1}\nonumber\\
&=&\frac{-1}{(\xi-i)(\xi+i)^3}{\rm tr}[c(\xi')A(x_0)]
+\frac{i}{(\xi-i)(\xi+i)^3}{\rm tr}[c(dx_n)A(x_0)].
\end{eqnarray}
By (3.42), we have
\begin{eqnarray}
{\rm tr}[\pi^+_{\xi_n}\sigma_{-1}(D_2^{-1})\times
\partial_{\xi_n}\frac{c(\xi)Ac(\xi)}
{|\xi|^4}](x_0)|_{|\xi'|=1}
=\frac{-1}{(\xi-i)(\xi+i)^3}{\rm tr}[c(\xi')A(x_0)].
\end{eqnarray}
We note that $i<n,~\int_{|\xi'|=1}\{\xi_{i_{1}}\xi_{i_{2}}\cdots\xi_{i_{2d+1}}\}\sigma(\xi')=0$,
so ${\rm tr }[c(\xi')A(x_0)]$ has no contribution for computing {\rm case~3)}.

By (3.56) and (3.60), we have
\begin{eqnarray}
{\rm tr}[\pi^+_{\xi_n}\sigma_{-1}(D_2^{-1})\times
\partial_{\xi_n}(P_3)](x_0)|_{|\xi'|=1}
=\frac{12h'(0)(i\xi^2_n+\xi_n-2i)}{(\xi-i)^3(\xi+i)^3}
+\frac{48h'(0)i\xi_n}{(\xi-i)^3(\xi+i)^4},
\end{eqnarray}
then
\begin{eqnarray}
-i\Omega_3\int_{\Gamma_+}\Big[\frac{12h'(0)(i\xi_n^2+\xi_n-2i)}
{(\xi_n-i)^3(\xi_n+i)^3}+\frac{48h'(0)i\xi_n}{(\xi_n-i)^3(\xi_n+i)^4}\Big]d\xi_ndx'=
-\frac{9}{2}\pi h'(0)\Omega_3dx'.
\end{eqnarray}
By (3.56) and (3.59), we have
\begin{eqnarray}
&&{\rm tr}[\pi^+_{\xi_n}\sigma_{-1}(D_2^{-1})\times
\partial_{\xi_n}\frac{c(\xi)(\lambda_1l(v))c(\xi)}
{|\xi|^4}](x_0)|_{|\xi'|=1}\nonumber\\
&=&\frac{-1}{(\xi-i)(\xi+i)^3}{\rm tr}[c(\xi')(\lambda_1l(v))(x_0)]
+\frac{i}{(\xi-i)(\xi+i)^3}{\rm tr}[c(dx_n)(\lambda_1l(v))(x_0)].
\end{eqnarray}
By (3.52), we have
\begin{eqnarray}
&&-i\int_{|\xi'|=1}\int^{+\infty}_{-\infty}{\rm tr}[\pi^+_{\xi_n}\sigma_{-1}(D_2^{-1})\times
\partial_{\xi_n}\frac{c(\xi)(\lambda_1l(v))c(\xi)}
{|\xi|^4}](x_0)d\xi_n\sigma(\xi')dx'\nonumber\\
&=&-i\int_{|\xi'|=1}\int^{+\infty}_{-\infty}\frac{i}{(\xi-i)(\xi+i)^3}\Big[{\rm tr}[c(dx_n)(\lambda_1l(v))(x_0)]+i{\rm tr}[c(\xi')(\lambda_1l(v))(x_0)]\Big]d\xi_n\sigma(\xi')dx'\nonumber\\
&=&-\frac{\pi}{4}\Big[{\rm tr}[c(dx_n)(\lambda_1l(v))]+i{\rm tr}[c(\xi')(\lambda_1l(v))(x_0)]\Big]\Omega_3dx'\nonumber\\
&=&-2\pi\lambda_1\langle v, dx_n\rangle\Omega_3dx'
.
\end{eqnarray}
So we have
\begin{eqnarray}
{\rm case~ 3)}=\Big[-\frac{9}{2}\pi h'(0)-2\lambda_1\pi\langle v,dx_n\rangle\Big]\Omega_3dx'.
\end{eqnarray}
Since $\Phi_1$ is the sum of the cases 1), 2) and 3), $\Phi_1=2(\lambda_2-\lambda_1)\pi\langle v, dx_n\rangle\Omega_3dx'$.

\begin{thm}
Let $M$ be a $4$-dimensional oriented
compact manifold with the boundary $\partial M$ and the metric
$g^M$ as above, $D_i~(i=1,2)$ be statistical de Rham Hodge Operators on $\widehat{M}$, then
\begin{eqnarray}
\widetilde{{\rm Wres}}[\pi^+D_2^{-1}\circ\pi^+D_1^{-1}]&=&32\pi^2\int_{M}
\bigg(-\frac{4}{3}s-4(\lambda^2_1+\lambda^2_2)|v|^2\bigg)d{\rm Vol_{M}}\nonumber\\
&&+\int_{\partial M}2(\lambda_2-\lambda_1)\pi\langle v, dx_n\rangle\Omega_3dx'.
\end{eqnarray}
where $s$ is the scalar curvature.
\end{thm}
Let $D=d+\delta+l(v),~D^*=d+\delta+\varepsilon(v^*)$.
\begin{cor}
When $\lambda_1=\lambda_2=1$, we get for a $4$-dimensional oriented
compact manifold $M$ with the boundary $\partial M$
\begin{eqnarray*}
\widetilde{{\rm Wres}}[\pi^+D^{-1}\circ\pi^+D^{-1}]&=&32\pi^2\int_{M}
\bigg(-\frac{4}{3}s-8|v|^2\bigg)d{\rm Vol_{M}}.
\end{eqnarray*}
where $s$ is the scalar curvature.
\end{cor}

On the other hand, we also prove the Kastler-Kalau-Walze type theorem for $4$-dimensional manifolds with boundary associated with $(D_i^*)^2~(i=1,2)$.
By (3.3) and (3.4), we will compute
\begin{equation}
\widetilde{{\rm Wres}}[\pi^+(D_1^*)^{-1}\circ\pi^+(D_2^*)^{-1}]=\int_M\int_{|\xi|=1}{\rm
trace}_{\wedge^*T^*M}[\sigma_{-4}((D_2^*D_1^*)^{-1})]\sigma(\xi)dx+\int_{\partial M}\Phi_2,
\end{equation}
where
\begin{eqnarray}
\Phi_2 &=&\int_{|\xi'|=1}\int^{+\infty}_{-\infty}\sum^{\infty}_{j, k=0}\sum\frac{(-i)^{|\alpha|+j+k+1}}{\alpha!(j+k+1)!}
\times {\rm trace}_{\wedge^*T^*M}[\partial^j_{x_n}\partial^\alpha_{\xi'}\partial^k_{\xi_n}\sigma^+_{r}
((D_1^*)^{-1})(x',0,\xi',\xi_n)
\nonumber\\
&&\times\partial^\alpha_{x'}\partial^{j+1}_{\xi_n}\partial^k_{x_n}\sigma_{l}((D_2^*)^{-1})(x',0,\xi',\xi_n)]d\xi_n\sigma(\xi')dx',
\end{eqnarray}
and the sum is taken over $r+l-k-j-|\alpha|=-3,~~r\leq -1,l\leq-1$.\\

Locally we can use Theorem 2.2 (2.43) to compute the interior of $\widetilde{{\rm Wres}}[\pi^+(D_1^*)^{-1}\circ\pi^+(D_2^*)^{-1}]$, we have
\begin{eqnarray}
&&\int_M\int_{|\xi|=1}{\rm
trace}_{\wedge^*T^*M}[\sigma_{-4}((D_2^*D_1^*)^{-1})]\sigma(\xi)dx=32\pi^2\int_{M}
\bigg(-\frac{4}{3}s-4(\lambda^2_1+\lambda^2_2)|v^*|^2\bigg)d{\rm Vol_{M}}
\end{eqnarray}

So we only need to compute $\int_{\partial M} \Phi_2$. From the remark above, now we can compute $\Phi_2$ (see formula (3.70) for the definition of $\Phi_2$). We use ${\rm tr}$ as shorthand of ${\rm trace}$. Since $n=4$, then ${\rm tr}_{\wedge^*T^*M}[{\rm \texttt{id}}]=16$, since the sum is taken over $
r+l-k-j-|\alpha|=-3,~~r\leq -1,l\leq-1,$ then we have the following five cases:

~\\
\noindent  {\bf case a)~I)}~$r=-1,~l=-1,~k=j=0,~|\alpha|=1$\\

\noindent By (3.70), we get
\begin{equation}
{\rm case~a)~I)}=-\int_{|\xi'|=1}\int^{+\infty}_{-\infty}\sum_{|\alpha|=1}
 {\rm tr}[\partial^\alpha_{\xi'}\pi^+_{\xi_n}\sigma_{-1}((D_1^*)^{-1})\times
 \partial^\alpha_{x'}\partial_{\xi_n}
 \sigma_{-1}((D_2^*)^{-1})](x_0)d\xi_n\sigma(\xi')dx'.
\end{equation}
\noindent  {\bf case a)~II)}~$r=-1,~l=-1,~k=|\alpha|=0,~j=1$\\

\noindent By (3.70), we get
\begin{equation}
{\rm case~ a)~II)}=-\frac{1}{2}\int_{|\xi'|=1}\int^{+\infty}_{-\infty} {\rm
tr} [\partial_{x_n}\pi^+_{\xi_n}\sigma_{-1}((D_1^*)^{-1})\times
\partial_{\xi_n}^2\sigma_{-1}((D_2^*)^{-1})](x_0)d\xi_n\sigma(\xi')dx'.
\end{equation}

\noindent  {\bf case a)~III)}~$r=-1,~l=-1,~j=|\alpha|=0,~k=1$\\

\noindent By (3.70), we get
\begin{equation}
{\rm case~ a)~III)}=-\frac{1}{2}\int_{|\xi'|=1}\int^{+\infty}_{-\infty}
{\rm tr} [\partial_{\xi_n}\pi^+_{\xi_n}\sigma_{-1}((D_1^*)^{-1})\times
\partial_{\xi_n}\partial_{x_n}\sigma_{-1}((D_2^*)^{-1})](x_0)d\xi_n\sigma(\xi')dx'.
\end{equation}
By Lemma 3.7, we have $\sigma_{-1}(D_i^{-1})=\sigma_{-1}((D_i^*)^{-1})~(i=1,2)$.
By (3.19)-(3.34), so {\bf case a)} vanishes.\\

\noindent  {\bf case b)}~$r=-2,~l=-1,~k=j=|\alpha|=0$\\

\noindent By (3.70), we get
\begin{eqnarray}
{\rm case~ b)}&=&-i\int_{|\xi'|=1}\int^{+\infty}_{-\infty}{\rm tr} [\pi^+_{\xi_n}\sigma_{-2}((D_1^*)^{-1})\times
\partial_{\xi_n}\sigma_{-1}((D_2^*)^{-1})](x_0)d\xi_n\sigma(\xi')dx'.
\end{eqnarray}
By Lemma 3.7, we have $\sigma_{-1}(D_i^{-1})=\sigma_{-1}((D_i^*)^{-1})~(i=1,2)$ and
\begin{eqnarray}
\sigma_{-2}((D_1^*)^{-1})(x_0)
=\frac{c(\xi)\sigma_{0}(D_1^*)(x_0)c(\xi)}{|\xi|^4}+\frac{c(\xi)}{|\xi|^6}c(dx_n)
[\partial_{x_n}[c(\xi')](x_0)|\xi|^2-c(\xi)h'(0)|\xi|^2_{\partial
M}],
\end{eqnarray}
where $\sigma_{0}(D_1^*)(x_0)=A(x_0)+B(x_0)+\lambda_1\varepsilon(v^*).$
Then
\begin{eqnarray}
&&\pi^+_{\xi_n}\sigma_{-2}((D_1^*)^{-1}(x_0))|_{|\xi'|=1}\nonumber\\
&=&\pi^+_{\xi_n}\Big[\frac{c(\xi)A(x_0)c(\xi)}{(1+\xi_n^2)^2}\Big]+\pi^+_{\xi_n}
\Big[\frac{c(\xi)(\lambda_1\varepsilon(v^*)(x_0))c(\xi)}{(1+\xi_n^2)^2}\Big]
+\pi^+_{\xi_n}\Big[\frac{c(\xi)B(x_0)c(\xi)
+c(\xi)c(dx_n)\partial_{x_n}[c(\xi')](x_0)}{(1+\xi_n^2)^2}\nonumber\\
&&-h'(0)\frac{c(\xi)c(dx_n)c(\xi)}{(1+\xi_n^{2})^3}\Big].
\end{eqnarray}
By (3.40)-(3.50), we have\\
\begin{eqnarray}
{\rm case~ b)}
&=&\frac{9}{2}\pi h'(0)\Omega_3dx'-i\int_{|\xi'|=1}\int^{+\infty}_{-\infty}{\rm trace} [\pi^+_{\xi_n}\Big[\frac{c(\xi)(\lambda_1\varepsilon(v^*)(x_0))c(\xi)}{(1+\xi_n^2)^2}\Big]\times\nonumber\\
&&
\partial_{\xi_n}\sigma_{-1}((D_2^*)^{-1})](x_0)d\xi_n\sigma(\xi')dx'.
\end{eqnarray}
Similar to (3.51), we have
\begin{eqnarray}
&&{\rm tr }[\pi^+_{\xi_n}\Big[\frac{c(\xi)(\lambda_1\varepsilon(v^*)(x_0))c(\xi)}{(1+\xi_n^2)^2}\Big]
\times\partial_{\xi_n}\sigma_{-1}(D_2^*)^{-1})(x_0)]|_{|\xi'|=1}\nonumber\\
&=&\frac{1}{2(1+\xi_n^2)^2}{\rm tr }[c(\xi')\lambda_1\varepsilon(v^*)(x_0)]
+\frac{i}{2(1+\xi_n^2)^2}{\rm tr }[c(dx_n)\lambda_1\varepsilon(v^*)(x_0)].
\end{eqnarray}
By the relation of the Clifford action and ${\rm tr}{AB}={\rm tr }{BA}$,  we have the equalities:\\
\begin{eqnarray}
{\rm tr}[c(dx_n)\varepsilon(v^*)]=-8\langle v^*,\frac{\partial}{\partial x_n}\rangle;
~~{\rm tr }[c(\xi')\varepsilon(v^*)]=-8\langle v^*,g(\xi',\cdot)\rangle;
\end{eqnarray}
By (3.79) and (3.80), we have
\begin{eqnarray}
-i\int_{|\xi'|=1}\int^{+\infty}_{-\infty}{\rm tr} [\pi^+_{\xi_n}
\Big[\frac{c(\xi)\lambda_1\varepsilon(v^*)c(\xi)}{(1+\xi_n^2)^2}\Big]\times
\partial_{\xi_n}\sigma_{-1}(D_2^{-1})](x_0)d\xi_n\sigma(\xi')dx'
=-2\lambda_1\pi\langle v^*,\frac{\partial}{\partial x_n}\rangle\Omega_3dx'
.
\end{eqnarray}
By (3.78) and (3.81), we have\\
\begin{eqnarray}
{\rm case~b)}=[\frac{9}{2}\pi h'(0)-2\lambda_1\pi\langle v^*,\frac{\partial}{\partial x_n}\rangle]\Omega_3dx'.
\end{eqnarray}

\noindent {\bf  case c)}~$r=-1,~l=-2,~k=j=|\alpha|=0$\\
By (3.70), we get
\begin{equation}
{\rm case~ c)}=-i\int_{|\xi'|=1}\int^{+\infty}_{-\infty}{\rm tr} [\pi^+_{\xi_n}\sigma_{-1}((D_1^*)^{-1})\times
\partial_{\xi_n}\sigma_{-2}((D_2^*)^{-1})](x_0)d\xi_n\sigma(\xi')dx'.
\end{equation}
By Lemma 3.7, we have $\sigma_{-1}(D_i^{-1})=\sigma_{-1}((D_i^*)^{-1})~(i=1,2)$.
Similar to (3.57), we have
\begin{eqnarray}
&&\partial_{\xi_n}\sigma_{-2}((D_2^*)^{-1})(x_0)|_{|\xi'|=1}\nonumber\\
&=&
\partial_{\xi_n}\bigg\{\frac{c(\xi)[A(x_0)+B(x_0)
+(\lambda_2\varepsilon(v^*)(x_{0}))]c(\xi)}{|\xi|^4}
+\frac{c(\xi)}{|\xi|^6}c(dx_n)[\partial_{x_n}[c(\xi')](x_0)|\xi|^2-c(\xi)h'(0)]\bigg\}\nonumber\\
&=&\partial_{\xi_n}\bigg\{\frac{c(\xi)A(x_0)]c(\xi)}{|\xi|^4}
+\frac{c(\xi)}{|\xi|^6}c(dx_n)[\partial_{x_n}[c(\xi')](x_0)|\xi|^2-c(\xi)h'(0)]\bigg\}
+\partial_{\xi_n}\frac{c(\xi)B(x_0)c(\xi)}{|\xi|^4}\nonumber\\
&&+\partial_{\xi_n}\frac{c(\xi)(\lambda_2\varepsilon(v^*)(x_{0}))c(\xi)}{|\xi|^4}.
\end{eqnarray}
By (3.58)-(3.65), we have
\begin{eqnarray}
{\rm case~ c)}&=&-\frac{9}{2}\pi h'(0)-i\int_{|\xi'|=1}\int^{+\infty}_{-\infty}{\rm tr}[\pi^+_{\xi_n}\sigma_{-1}((D_1^*)^{-1})\times\nonumber\\
&&\partial_{\xi_n}\frac{c(\xi)(\lambda_2\varepsilon(v^*))c(\xi)}
{|\xi|^4}](x_0)d\xi_n\sigma(\xi')dx'.
\end{eqnarray}
Similar to (3.66), we have
\begin{eqnarray}
&&-i\int_{|\xi'|=1}\int^{+\infty}_{-\infty}{\rm tr}[\pi^+_{\xi_n}\sigma_{-1}((D_1^*)^{-1})\times
\partial_{\xi_n}\frac{c(\xi)(\lambda_2\varepsilon(v^*))c(\xi)}
{|\xi|^4}](x_0)d\xi_n\sigma(\xi')dx'\nonumber\\
&=&-i\int_{|\xi'|=1}\int^{+\infty}_{-\infty}\frac{i}{(\xi-i)(\xi+i)^3}\Big[{\rm tr}[c(dx_n)(\lambda_2\varepsilon(v^*))(x_0)]+i{\rm tr}[c(\xi')(\lambda_2\varepsilon(v^*))(x_0)]\Big]d\xi_n\sigma(\xi')dx'\nonumber\\
&=&-\frac{\pi}{4}\Big[{\rm tr}[c(dx_n)(\lambda_2\varepsilon(v^*))]+i{\rm tr}[c(\xi')(\lambda_2\varepsilon(v^*))(x_0)]\Big]\Omega_3dx'\nonumber\\
&=&2\lambda_2\pi\langle v^*,\frac{\partial}{\partial x_n}\rangle\Omega_3dx'
.
\end{eqnarray}
So, we have
\begin{eqnarray}
{\rm case~ c)}&=&\Big[-\frac{9}{2}\pi h'(0)+2\lambda_2\pi\langle v^*,\frac{\partial}{\partial x_n}\rangle\Big]\Omega_3dx'.
\end{eqnarray}
Since $\Phi_2$ is the sum of the cases a), b) and c), so $\Phi_2=2(\lambda_2-\lambda_1)\pi\langle v^*,\frac{\partial}{\partial x_n}\rangle\Omega_3dx'.$\\

\begin{thm}
Let $M$ be a $4$-dimensional oriented
compact manifold with the boundary $\partial M$ and the metric
$g^M$ as above, $D_i$ and $D^*_i~(i=1,2)$ be statistical de Rham Hodge Operators on $\widehat{M}$, then
\begin{eqnarray}
\widetilde{{\rm Wres}}[\pi^+(D^*_1)^{-1}\circ\pi^+(D^*_2)^{-1}]&=&32\pi^2\int_{M}
\bigg(-\frac{4}{3}s-4(\lambda^2_1+\lambda^2_2)|v^*|^2\bigg)d{\rm Vol_{M}}\nonumber\\
&&+\int_{\partial M}2(\lambda_2-\lambda_1)\pi\langle v^*,\frac{\partial}{\partial x_n}\rangle\Omega_3dx'
.
\end{eqnarray}
where $s$ is the scalar curvature.
\end{thm}

\begin{cor}
When $\lambda_1=\lambda_2=1$, we get for a $4$-dimensional oriented
compact manifold $M$ with the boundary $\partial M$
\begin{eqnarray*}
\widetilde{{\rm Wres}}[\pi^+(D^*)^{-1}\circ\pi^+(D^*)^{-1}]&=&32\pi^2\int_{M}
\bigg(-\frac{4}{3}s-8|v^*|^2\bigg)d{\rm Vol_{M}}
.
\end{eqnarray*}
where $s$ is the scalar curvature.
\end{cor}

Next, we prove the Kastler-Kalau-Walze type theorem for $4$-dimensional manifolds with boundary associated with $D_2^*D_1$.
By (3.3) and (3.4), we will compute
\begin{equation}
\widetilde{{\rm Wres}}[\pi^+D_1^{-1}\circ\pi^+(D_2^*)^{-1}]=\int_M\int_{|\xi|=1}{\rm
trace}_{\wedge^*T^*M}[\sigma_{-4}((D_2^*D_1)^{-1})]\sigma(\xi)dx+\int_{\partial M}\Phi_3,
\end{equation}
where
\begin{eqnarray}
\Phi_3 &=&\int_{|\xi'|=1}\int^{+\infty}_{-\infty}\sum^{\infty}_{j, k=0}\sum\frac{(-i)^{|\alpha|+j+k+1}}{\alpha!(j+k+1)!}
\times {\rm trace}_{\wedge^*T^*M}[\partial^j_{x_n}\partial^\alpha_{\xi'}\partial^k_{\xi_n}\sigma^+_{r}
(D_1^{-1})(x',0,\xi',\xi_n)
\nonumber\\
&&\times\partial^\alpha_{x'}\partial^{j+1}_{\xi_n}\partial^k_{x_n}\sigma_{l}((D_2^*)^{-1})(x',0,\xi',\xi_n)]d\xi_n\sigma(\xi')dx',
\end{eqnarray}
and the sum is taken over $r+l-k-j-|\alpha|=-3,~~r\leq -1,l\leq-1$.\\

Locally we can use Theorem 2.2 (2.44) to compute the interior of $\widetilde{{\rm Wres}}[\pi^+D_1^{-1}\circ\pi^+(D_2^*)^{-1}]$, we have
\begin{eqnarray}
&&\int_M\int_{|\xi|=1}{\rm
trace}_{\wedge^*T^*M}[\sigma_{-4}((D_2^*D_1)^{-1})]\sigma(\xi)dx\nonumber\\
&=&32\pi^2\int_{M}
\bigg[-\frac{4}{3}s
-2(\lambda^2_1+\lambda^2_2-4\lambda_1\lambda_2)|v|^2+\frac{1}{2}{\rm tr}[\lambda_2\nabla^{TM}_{e_{j}}(\varepsilon(v^*))c(e_{j})\nonumber\\
&&-\lambda_1c(e_{j})\nabla^{TM}_{e_{j}}(l(v))]
\bigg]d{\rm Vol_{M}}.
\end{eqnarray}

So we only need to compute $\int_{\partial M} \Phi_3$. From the remark above, now we can compute $\Phi_3$ (see formula (3.89) for the definition of $\Phi_3$). We use ${\rm tr}$ as shorthand of ${\rm trace}$. Since $n=4$, then ${\rm tr}_{\wedge^*T^*M}[{\rm \texttt{id}}]=16$, since the sum is taken over $
r+l-k-j-|\alpha|=-3,~~r\leq -1,l\leq-1,$ then we have the following five cases:

~\\
\noindent  {\bf case a)~I)}~$r=-1,~l=-1,~k=j=0,~|\alpha|=1$\\

\noindent By (3.89), we get
\begin{equation}
{\rm case~a)~I)}=-\int_{|\xi'|=1}\int^{+\infty}_{-\infty}\sum_{|\alpha|=1}
 {\rm tr}[\partial^\alpha_{\xi'}\pi^+_{\xi_n}\sigma_{-1}(D_1^{-1})\times
 \partial^\alpha_{x'}\partial_{\xi_n}
 \sigma_{-1}((D_2^*)^{-1})](x_0)d\xi_n\sigma(\xi')dx'.
\end{equation}
\noindent  {\bf case a)~II)}~$r=-1,~l=-1,~k=|\alpha|=0,~j=1$\\

\noindent By (3.89), we get
\begin{equation}
{\rm case~ a)~II)}=-\frac{1}{2}\int_{|\xi'|=1}\int^{+\infty}_{-\infty} {\rm
tr} [\partial_{x_n}\pi^+_{\xi_n}\sigma_{-1}((D_1^{-1}))\times
\partial_{\xi_n}^2\sigma_{-1}((D_2^*)^{-1})](x_0)d\xi_n\sigma(\xi')dx'.
\end{equation}

\noindent  {\bf case a)~III)}~$r=-1,~l=-1,~j=|\alpha|=0,~k=1$\\

\noindent By (3.89), we get
\begin{equation}
{\rm case~ a)~III)}=-\frac{1}{2}\int_{|\xi'|=1}\int^{+\infty}_{-\infty}
{\rm tr} [\partial_{\xi_n}\pi^+_{\xi_n}\sigma_{-1}((D_1^{-1})\times
\partial_{\xi_n}\partial_{x_n}\sigma_{-1}((D_2^*)^{-1})](x_0)d\xi_n\sigma(\xi')dx'.
\end{equation}
By Lemma 3.7, we have $\sigma_{-1}(D_i^{-1})=\sigma_{-1}((D_i^*)^{-1})~(i=1,2)$.
By (3.19)-(3.34), so {\bf case a)} vanishes.\\

\noindent  {\bf case b)}~$r=-2,~l=-1,~k=j=|\alpha|=0$\\

\noindent By (3.89), we get
\begin{eqnarray}
{\rm case~ b)}&=&-i\int_{|\xi'|=1}\int^{+\infty}_{-\infty}{\rm tr} [\pi^+_{\xi_n}\sigma_{-2}(D_1^{-1})\times
\partial_{\xi_n}\sigma_{-1}((D_2^*)^{-1})](x_0)d\xi_n\sigma(\xi')dx'.
\end{eqnarray}
By Lemma 3.7, we have $\sigma_{-1}(D_i^{-1})=\sigma_{-1}((D_i^*)^{-1})~(i=1,2)$.
By (3.35)-(3.54), we have\\
\begin{eqnarray}
{\rm case~b)}=[\frac{9}{2}\pi h'(0)+2\lambda_1\pi\langle dx_n,v\rangle]\Omega_3dx'.
\end{eqnarray}

\noindent {\bf  case c)}~$r=-1,~l=-2,~k=j=|\alpha|=0$\\
By (3.70), we get
\begin{equation}
{\rm case~ c)}=-i\int_{|\xi'|=1}\int^{+\infty}_{-\infty}{\rm tr} [\pi^+_{\xi_n}\sigma_{-1}(D_1^{-1})\times
\partial_{\xi_n}\sigma_{-2}((D_2^*)^{-1})](x_0)d\xi_n\sigma(\xi')dx'.
\end{equation}
By Lemma 3.7, we have $\sigma_{-1}(D_i^{-1})=\sigma_{-1}((D_i^*)^{-1})~(i=1,2)$.
By (3.83)-(3.87), we have
\begin{eqnarray}
{\rm case~ c)}&=&\Big[-\frac{9}{2}\pi h'(0)+2\lambda_2\pi\langle v^*,\frac{\partial}{\partial x_n}\rangle\Big]\Omega_3dx'.
\end{eqnarray}
Since $\Phi_2$ is the sum of the cases a), b) and c), so
$$\Phi_3=2(\lambda_1+\lambda_2)\pi\langle dx_n,v\rangle\Omega_3dx'.$$

\begin{thm}
Let $M$ be a $4$-dimensional oriented
compact manifold with the boundary $\partial M$ and the metric
$g^M$ as above, $D_i^*~(i=1,2)$ be statistical de Rham Hodge Operators on $\widehat{M}$, then
\begin{eqnarray}
&&\widetilde{{\rm Wres}}[\pi^+D_1^{-1}\circ\pi^+(D_2^*)^{-1}]\nonumber\\
&=&32\pi^2\int_{M}
\bigg[-\frac{4}{3}s
-2(\lambda^2_1+\lambda^2_2-4\lambda_1\lambda_2)|v|^2+\frac{1}{2}{\rm tr}[\lambda_2\nabla^{TM}_{e_{j}}(\varepsilon(v^*))c(e_{j})\nonumber\\
&&-\lambda_1c(e_{j})\nabla^{TM}_{e_{j}}(l(v))]
\bigg]d{\rm Vol_{M}}
+\int_{\partial M}2(\lambda_1+\lambda_2)\pi\langle dx_n,v\rangle\Omega_3dx'.
\end{eqnarray}
where $s$ is the scalar curvature.
\end{thm}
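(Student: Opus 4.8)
The object on the left is the lower-dimensional volume attached to $D_v^*D_v$, that is, $\widetilde{{\rm Wres}}[\pi^+D_v^{-1}\circ\pi^+(D_v^*)^{-1}]$, and the plan is to read it off from the Fedosov--Golse--Leichtnam--Schrohe decomposition already prepared above. Applying the boundary formulas (3.3)--(3.4) to the pair $\big(D_v^{-1},(D_v^*)^{-1}\big)$ (that is, with $p_1=p_2=1$) gives
\[
\widetilde{{\rm Wres}}[\pi^+D_v^{-1}\circ\pi^+(D_v^*)^{-1}]=\int_M\int_{|\xi|=1}{\rm tr}_{\wedge^*T^*M}\big[\sigma_{-4}\big((D_v^*D_v)^{-1}\big)\big]\,\sigma(\xi)\,dx+\int_{\partial M}\Phi_3 ,
\]
so it suffices to compute the interior integral and the boundary integral $\int_{\partial M}\Phi_3$ separately and add them.

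For the interior integral I would invoke Theorem 2.1, which exhibits $D_v^*D_v$ as a generalized Laplacian with endomorphism $E'_{D_v^*D_v}(x_0)$ given there, and then Theorem 2.2 (formula (2.44)) specialized to $n=4$; the only further ingredients are ${\rm tr}_{\wedge^*T^*M}[{\rm id}]=16=2^4$, the Clifford trace vanishing ${\rm tr}[\bar{c}(e_i)\bar{c}(e_j)c(e_k)c(e_l)]=0$ for $i\ne j$, and the evaluations of ${\rm tr}\big[(c(e_i)l(v)+\varepsilon(v^*)c(e_i))^2\big]$ and ${\rm tr}[\varepsilon(v^*)l(v)]$. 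This produces
\[
\int_M\int_{|\xi|=1}{\rm tr}_{\wedge^*T^*M}\big[\sigma_{-4}\big((D_v^*D_v)^{-1}\big)\big]\,\sigma(\xi)\,dx=32\pi^2\int_M\Big[-\tfrac43 s+4|v|^2+\tfrac12{\rm tr}\big[\nabla^{TM}_{e_{j}}(\varepsilon(v^*))c(e_{j})-c(e_{j})\nabla^{TM}_{e_{j}}(l(v))\big]\Big]d{\rm Vol}_M .
\]

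For the boundary term I would split $\Phi_3$ along $r+l-k-j-|\alpha|=-3$, $r\le-1$, $l\le-1$, into the five cases a)I), a)II), a)III), b), c). In cases a)I)--a)III) only $\sigma_{-1}(D_v^{-1})=\sigma_{-1}((D_v^*)^{-1})=ic(\xi)/|\xi|^2$ enters (Lemma 3.7), so these coincide term by term with cases 1)I)--1)III) of the $D_v$ computation and their sum vanishes. Case b) pairs $\pi^+_{\xi_n}\sigma_{-2}(D_v^{-1})$ against $\partial_{\xi_n}\sigma_{-1}((D_v^*)^{-1})=\partial_{\xi_n}\sigma_{-1}(D_v^{-1})$, and since $\sigma_0(D_v)=A(x_0)+B(x_0)+l(v)$ it is term-for-term case 2), giving $\big[\tfrac92\pi h'(0)+2\pi\langle dx_n,v\rangle\big]\Omega_3\,dx'$. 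Case c) pairs $\pi^+_{\xi_n}\sigma_{-1}(D_v^{-1})$ against $\partial_{\xi_n}\sigma_{-2}((D_v^*)^{-1})$, and since $\sigma_0(D_v^*)=A(x_0)+B(x_0)+\varepsilon(v^*)$ it is term-for-term case c) of the $(D_v^*)^2$ computation, giving $\big[-\tfrac92\pi h'(0)+2\pi\langle v^*,\partial/\partial x_n\rangle\big]\Omega_3\,dx'$. Adding the three contributions the $h'(0)$ terms cancel, and since $g_M^{nn}(x_0)=1$ forces $\langle v^*,\partial/\partial x_n\rangle=\langle v,dx_n\rangle$, one obtains $\Phi_3=4\pi\langle dx_n,v\rangle\Omega_3\,dx'$; note that here, unlike the $D_v^{-1}\circ D_v^{-1}$ and $(D_v^*)^{-1}\circ(D_v^*)^{-1}$ cases, the boundary contribution does not vanish, a consequence of the asymmetry of the two factors. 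Substituting $\Phi_3$ and the interior integral into the decomposition above gives the stated identity. The main obstacle is the explicit evaluation of cases b) and c): each requires the $\sigma_{-2}$ expansion of the relevant inverse, the projection $\pi^+_{\xi_n}$ computed via the Cauchy integral formula (2.16), the Clifford trace relations together with Lemmas 3.3--3.5 for the geometry near $\partial M$, and a residue computation of the resulting $\xi_n$-integral; this is lengthy but structurally identical to the two preceding theorems, so the work is bookkeeping rather than anything new.
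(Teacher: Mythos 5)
Your proposal is correct and follows essentially the same route as the paper: the same Boutet de Monvel decomposition into interior term plus $\Phi_3$, the interior term from Theorem 2.2 (2.44), and the same five-case split of $\Phi_3$ in which cases a) vanish, case b) reduces to case 2) of the $D_v^{-1}\circ D_v^{-1}$ computation, and case c) reduces to case c) of the $(D_v^*)^{-1}\circ(D_v^*)^{-1}$ computation, yielding $\Phi_3=4\pi\langle dx_n,v\rangle\Omega_3dx'$. You also correctly read the left-hand side as $\widetilde{{\rm Wres}}[\pi^+D_v^{-1}\circ\pi^+(D_v^*)^{-1}]$, which is what the surrounding text of the paper actually computes despite the typo in the theorem's displayed formula.
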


\begin{cor}
When $\lambda_1=\lambda_2=1$, we get for a $4$-dimensional oriented
compact manifold $M$ with the boundary $\partial M$
\begin{eqnarray*}
&&\widetilde{{\rm Wres}}[\pi^+D^{-1}\circ\pi^+(D^*)^{-1}]\nonumber\\
&=&32\pi^2\int_{M}
\bigg[-\frac{4}{3}s
+4|v|^2+\frac{1}{2}{\rm tr}[\nabla^{TM}_{e_{j}}(\varepsilon(v^*))c(e_{j})
-c(e_{j})\nabla^{TM}_{e_{j}}(l(v))]
\bigg]d{\rm Vol_{M}}
\nonumber\\
&&+\int_{\partial M}4\pi\langle dx_n,v\rangle\Omega_3dx'.
\end{eqnarray*}
where $s$ is the scalar curvature.
\end{cor}
\section{A Kastler-Kalau-Walze type theorem for $6$-dimensional with boundary }
In this section, we prove the Kastler-Kalau-Walze type theorems for $6$-dimensional manifolds with boundary. An application of (2.1.4) in \cite{Wa5} shows that

\begin{equation}
\widetilde{{\rm Wres}}[\pi^+D_1^{-1}\circ\pi^+(D_2^{*}D_1
      D_2^{*})^{-1}]=\int_M\int_{|\xi|=1}{\rm
trace}_{\wedge ^*T^*M}[\sigma_{-4}((D_2^*D_1)^{-2})]\sigma(\xi)dx+\int_{\partial M}\Psi,
\end{equation}
where
\begin{eqnarray}
\Psi &=&\int_{|\xi'|=1}\int^{+\infty}_{-\infty}\sum^{\infty}_{j, k=0}\sum\frac{(-i)^{|\alpha|+j+k+1}}{\alpha!(j+k+1)!}
\times {\rm trace}_{\wedge ^*T^*M}[\partial^j_{x_n}\partial^\alpha_{\xi'}\partial^k_{\xi_n}\sigma^+_{r}(D_1^{-1})(x',0,\xi',\xi_n)
\nonumber\\
&&\times\partial^\alpha_{x'}\partial^{j+1}_{\xi_n}\partial^k_{x_n}\sigma_{l}
((D_2^{*}D_1
      D_2^{*})^{-1})(x',0,\xi',\xi_n)]d\xi_n\sigma(\xi')dx',
\end{eqnarray}
and the sum is taken over $r+\ell-k-j-|\alpha|-1=-6, \ r\leq-1, \ell\leq -3$.\\
Locally we can use Theorem 2.2 (2.44) to compute the interior term of (4.1), we have
\begin{eqnarray}
&&\int_M\int_{|\xi|=1}{\rm
trace}_{\wedge^*T^*M}[\sigma_{-4}((D_2^*D_1)^{-2})]\sigma(\xi)dx\nonumber\\
&=&128\pi^3\int_{M}
\bigg[-\frac{16}{3}s-8(\lambda^2_1+\lambda^2_2-8\lambda_1\lambda_2)|v|^2+\frac{1}{2}{\rm tr}[\lambda_2\nabla^{TM}_{e_{j}}(\varepsilon(v^*))c(e_{j})\nonumber\\
&&-\lambda_1c(e_{j})\nabla^{TM}_{e_{j}}(l(v))]
\bigg]d{\rm Vol_{M}}.
\end{eqnarray}

So we only need to compute $\int_{\partial M} \Psi$. Let us now turn to compute the specification of
$D_2^*D_1D_2^*$.
\begin{eqnarray}
D_2^*D_1D_2^*
&=&\sum^{n}_{i=1}c(e_{i})\langle e_{i},dx_{l}\rangle(-g^{ij}\partial_{l}\partial_{i}\partial_{j})
+\sum^{n}_{i=1}c(e_{i})\langle e_{i},dx_{l}\rangle \bigg\{-(\partial_{l}g^{ij})\partial_{i}\partial_{j}-g^{ij}\bigg(4(\sigma_{i}
+a_{i})\partial_{j}-2\Gamma^{k}_{ij}\partial_{k}\bigg)\partial_{l}\bigg\} \nonumber\\
&&+\sum^{n}_{i=1}c(e_{i})\langle e_{i},dx_{l}\rangle \bigg\{-2(\partial_{l}g^{ij})(\sigma_{i}+a_i)\partial_{j}+g^{ij}
(\partial_{l}\Gamma^{k}_{ij})\partial_{k}-2g^{ij}[(\partial_{l}\sigma_{i})
+(\partial_{l}a_i)]\partial_{j}
         +(\partial_{l}g^{ij})\Gamma^{k}_{ij}\partial_{k}\nonumber\\
         &&+\sum_{j,k}\Big[\partial_{l}\Big(c(e_{j})\lambda_2\varepsilon(v^*)
         +\lambda_1l(v)c(e_{j})\Big)\Big]\langle e_{j},dx^{k}\rangle\partial_{k}
         +\sum_{j,k}\Big(c(e_{j})\lambda_2\varepsilon(v^*)
         +\lambda_1l(v)c(e_{j})\Big)\Big[\partial_{l}\langle e_{j},dx^{k}\rangle\Big]\partial_{k} \bigg\}\nonumber\\
         &&+\Big[(\sigma_{i}+a_{i})+\lambda_2\varepsilon(v^*)\Big](-g^{ij}\partial_{i}\partial_{j})
         +\sum^{n}_{i=1}c(e_{i})\langle e_{i},dx_{l}\rangle \bigg\{2\sum_{j,k}\Big(c(e_{j})\lambda_2\varepsilon(v^*)+\lambda_1l(v)c(e_{j})\Big)\nonumber\\
         &&\times\langle \widetilde{e_{i}},dx_{k}\rangle\bigg\}\partial_{l}\partial_{k}
         +\Big[(\sigma_{i}+a_{i})+\lambda_2\varepsilon(v^*)\Big]
         \bigg\{-\sum_{i,j}g^{i,j}\Big[2\sigma_{i}\partial_{j}+2a_{i}\partial_{j}
         -\Gamma_{i,j}^{k}\partial_{k}+(\partial_{i}\sigma_{j})+\frac{1}{4}s\nonumber\\
         &&+(\partial_{i}a_{j})+\sigma_{i}\sigma_{j}+\sigma_{i}a_{j}+a_{i}\sigma_{j}+a_{i}a_{j} -\Gamma_{i,j}^{k}\sigma_{k}-\Gamma_{i,j}^{k}a_{k}\Big]+\sum_{i,j}g^{i,j}\Big(c(e_{j})
         \lambda_2\varepsilon(v^*)
         +\lambda_1l(v)c(e_{j})\Big)\partial_{j}\nonumber\\
         &&+\sum_{i,j}g^{i,j}\Big[\lambda_1l(v)c(\partial_{i})\sigma_{i}+\lambda_1l(v)c(\partial_{i})a_{i}
         +c(\partial_{i})\lambda_2\partial_{i}(\varepsilon(v^*))+c(\partial_{i})\sigma_{i}\lambda_2\varepsilon(v^*)
         +c(\partial_{i})a_{i}\lambda_2\varepsilon(v^*)\Big]\nonumber\\
         &&+\lambda_1\lambda_2l(v)\varepsilon(v^*)-\frac{1}{8}\sum_{ijkl}R_{ijkl}\bar{c}(e_i)\bar{c}(e_j)
         c(e_k)c(e_l)
         \bigg\}.
\end{eqnarray}
Then, we obtain
\begin{lem} The following identities hold:
\begin{eqnarray}
\sigma_2(D_2^*D_1D_2^*)&=&\sum_{i,j,l}c(dx_{l})\partial_{l}(g^{i,j})\xi_{i}\xi_{j} +c(\xi)(4\sigma^k+4a^k-2\Gamma^k)\xi_{k}-2[\lambda_1c(\xi)l(v)c(\xi)-|\xi|^2\lambda_2\varepsilon(v^*)]\nonumber\\
&&+\frac{1}{4}|\xi|^2\sum_{s,t,l}\omega_{s,t}
(e_l)[c(e_l)\bar{c}(e_s)\bar{c}(e_t)
-c(e_l)c(e_s)c(e_t)]
+|\xi|^2\lambda_2\varepsilon(v^*);\nonumber\\
\sigma_{3}(D_2^*D_1D_2^*)
&=&ic(\xi)|\xi|^{2}.
\end{eqnarray}
\end{lem}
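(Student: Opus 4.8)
The operator $D_v^*D_vD_v^*$ has already been written out as an honest third-order differential operator in (4.4), of the shape $\sum_{|\alpha|\le 3}A_\alpha(x)\partial_x^\alpha$ with $\textrm{End}(\wedge^*T^*M)$-valued coefficients; for such an operator the homogeneous symbol of order $k$ is exactly $\sum_{|\alpha|=k}A_\alpha(x)(i\xi)^\alpha$. So the plan is simply to isolate, in the long expression (4.4), the terms carrying exactly three and exactly two derivatives and to perform the substitution $\partial_{x_j}\mapsto i\xi_j$. The only identities needed are the contraction $\sum_i c(e_i)\langle e_i,dx_l\rangle\,\xi_l=c(\xi)$, the relation $g^{ij}\xi_i\xi_j=|\xi|^2$, the Clifford relation $c(\xi)^2=-|\xi|^2$, and the raised-index abbreviations $\sigma^k=g^{ik}\sigma_i$, $a^k=g^{ik}a_i$, $\Gamma^k=g^{ij}\Gamma^k_{ij}$.

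For $\sigma_3$, the only three-derivative term in (4.4) is $\sum_i c(e_i)\langle e_i,dx_l\rangle(-g^{ij}\partial_l\partial_i\partial_j)$; the substitution contributes a factor $(i)^3=-i$, and after the two contractions this collapses to $i|\xi|^2c(\xi)$. As a consistency check, by Lemma~3.6 this must equal $\sigma_1(D_v^*)\sigma_1(D_v)\sigma_1(D_v^*)=(ic(\xi))^3=-ic(\xi)^3$, which is $i|\xi|^2c(\xi)$ again by $c(\xi)^2=-|\xi|^2$.

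For $\sigma_2$, I would collect the four groups of two-derivative terms in (4.4). The term $-\sum_i c(e_i)\langle e_i,dx_l\rangle(\partial_l g^{ij})\partial_i\partial_j$ gives $\sum_{i,j,l}c(dx_l)\partial_l(g^{ij})\xi_i\xi_j$. The term $-\sum_i c(e_i)\langle e_i,dx_l\rangle g^{ij}\big(4(\sigma_i+a_i)\partial_j-2\Gamma^k_{ij}\partial_k\big)\partial_l$ gives, after the contractions and the raised-index notation, $c(\xi)(4\sigma^k+4a^k-2\Gamma^k)\xi_k$. The term $2\sum_i c(e_i)\langle e_i,dx_l\rangle\sum_{j,k}\big(c(e_j)\varepsilon(v^*)+l(v)c(e_j)\big)\langle\widetilde{e_i},dx_k\rangle\,\partial_l\partial_k$ contracts to $2c(\xi)\big(c(\xi)\varepsilon(v^*)+l(v)c(\xi)\big)$, which by $c(\xi)^2=-|\xi|^2$ is $-2\big(c(\xi)l(v)c(\xi)-|\xi|^2\varepsilon(v^*)\big)$. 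Finally the Laplace-type pieces $-g^{ij}\partial_i\partial_j$ multiplied on the left by the zeroth-order endomorphism $\tfrac14\sum_{s,t}\omega_{s,t}(e_l)\big(c(e_l)\bar{c}(e_s)\bar{c}(e_t)-c(e_l)c(e_s)c(e_t)\big)+\varepsilon(v^*)$ built from $D_v^*$ in (4.4) contribute $\tfrac14|\xi|^2\sum_{s,t,l}\omega_{s,t}(e_l)\big[c(e_l)\bar{c}(e_s)\bar{c}(e_t)-c(e_l)c(e_s)c(e_t)\big]+|\xi|^2\varepsilon(v^*)$. Adding the four groups gives the asserted formula for $\sigma_2(D_v^*D_vD_v^*)$.

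The main obstacle is purely organizational bookkeeping. Expression (4.4) interleaves terms of all orders $0,1,2,3$, and since the Clifford and exterior/interior multiplication factors $c(e_i),\bar{c}(e_j),l(v),\varepsilon(v^*)$ do not commute, one must keep the left-to-right ordering inherited from $D_v^*\circ D_v\circ D_v^*$ and must not collapse a $c(\xi)\,\cdot\,c(\xi)$ block before its ordering is settled. One must also be careful to discard terms of (4.4) that appear second order but are in fact first order once the derivative count is done honestly (for example those carrying $(\partial_l\sigma_i)\partial_j$, $(\partial_l a_i)\partial_j$, or $\partial_l\Gamma^k_{ij}\partial_k$): these feed $\sigma_1$, not $\sigma_2$, and play no role here. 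With that sorting done, the two identities for $\sigma_3$ and $\sigma_2$ follow by collecting homogeneous terms.
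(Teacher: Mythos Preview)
Your proposal is correct and follows exactly the paper's approach: the paper simply reads off the order-$3$ and order-$2$ homogeneous parts from the explicit differential-operator expansion (4.4), and you carry out that bookkeeping in detail. One harmless slip to flag: in your third group the substitution $\partial_l\partial_k\mapsto (i\xi_l)(i\xi_k)=-\xi_l\xi_k$ gives $-2\,c(\xi)\big(c(\xi)\varepsilon(v^*)+l(v)c(\xi)\big)$ rather than $+2$, and it is this minus sign together with $c(\xi)^2=-|\xi|^2$ that produces the stated $-2\big[c(\xi)l(v)c(\xi)-|\xi|^2\varepsilon(v^*)\big]$.
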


Write
\begin{eqnarray}
\sigma(D_2^*D_1D_2^*)&=&p_3+p_2+p_1+p_0;
~\sigma((D_2^*D_1D_2^*)^{-1})=\sum^{\infty}_{j=3}q_{-j}.
\end{eqnarray}

By the composition formula of pseudodifferential operators, we have

\begin{eqnarray}
1&=&\sigma((D_2^*D_1D_2^*)\circ (D_2^*D_1D_2^*)^{-1})\nonumber\\
&=&
\sum_{\alpha}\frac{1}{\alpha!}\partial^{\alpha}_{\xi}
[\sigma(D_2^*D_1D_2^*)]D^{\alpha}_{x}
[(D_2^*D_1D_2^*)^{-1}] \nonumber\\
&=&(p_3+p_2+p_1+p_0)(q_{-3}+q_{-4}+q_{-5}+\cdots) \nonumber\\
&+&\sum_j(\partial_{\xi_j}p_3+\partial_{\xi_j}p_2+\partial_{\xi_j}p_1+\partial_{\xi_j}p_0)
(D_{x_j}q_{-3}+D_{x_j}q_{-4}+D_{x_j}q_{-5}+\cdots) \nonumber\\
&=&p_3q_{-3}+(p_3q_{-4}+p_2q_{-3}+\sum_j\partial_{\xi_j}p_3D_{x_j}q_{-3})+\cdots,
\end{eqnarray}
by (4.7), we have

\begin{equation}
q_{-3}=p_3^{-1};~q_{-4}=-p_3^{-1}[p_2p_3^{-1}+\sum_j\partial_{\xi_j}p_3D_{x_j}(p_3^{-1})].
\end{equation}
By Lemma 4.1, we have some symbols of operators.
\begin{lem} The following identities hold:
\begin{eqnarray}
\sigma_{-3}((D_2^*D_1D_2^*)^{-1})&=&\frac{ic(\xi)}{|\xi|^{4}};\nonumber\\
\sigma_{-4}((D_2^*D_1D_2^*)^{-1})&=&
\frac{c(\xi)\sigma_{2}(D_2^*D_1D_2^*)c(\xi)}{|\xi|^8}
+\frac{ic(\xi)}{|\xi|^8}\Big(|\xi|^4c(dx_n)\partial_{x_n}c(\xi')
-2h'(0)c(dx_n)c(\xi)\nonumber\\
&&+2\xi_{n}c(\xi)\partial_{x_n}c(\xi')+4\xi_{n}h'(0)\Big).
\end{eqnarray}
\end{lem}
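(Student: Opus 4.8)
The plan is to invert the third-order operator $D_v^*D_vD_v^*$ symbol by symbol using the composition recursion already set up in (4.8)--(4.9); this reduces the computation entirely to the symbols $p_3=\sigma_3(D_v^*D_vD_v^*)=ic(\xi)|\xi|^2$ and $p_2=\sigma_2(D_v^*D_vD_v^*)$ furnished by Lemma 4.1.

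First I would determine the leading term $q_{-3}$. Since $p_3=ic(\xi)|\xi|^2$ and the Clifford relation gives $c(\xi)^2=-|\xi|^2$, one checks directly that $\big(\frac{ic(\xi)}{|\xi|^4}\big)\big(ic(\xi)|\xi|^2\big)=\frac{-c(\xi)^2}{|\xi|^2}=1$, so $q_{-3}=p_3^{-1}=\frac{ic(\xi)}{|\xi|^4}$, which is the first asserted identity.

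Next, for $q_{-4}$ I would use $q_{-4}=-p_3^{-1}\big[p_2 p_3^{-1}+\sum_j\partial_{\xi_j}p_3\,D_{x_j}(p_3^{-1})\big]$ from (4.9). The first summand is immediate: $-p_3^{-1}p_2 p_3^{-1}=-\frac{ic(\xi)}{|\xi|^4}\,p_2\,\frac{ic(\xi)}{|\xi|^4}=\frac{c(\xi)\,\sigma_2(D_v^*D_vD_v^*)\,c(\xi)}{|\xi|^8}$, which is the first term in the claimed expression. For the second summand I would compute $\partial_{\xi_j}p_3=ic(dx_j)|\xi|^2+2i\xi_j c(\xi)$ and $D_{x_j}(p_3^{-1})=-i\,\partial_{x_j}\!\big(\frac{ic(\xi)}{|\xi|^4}\big)$, and then evaluate everything at the fixed boundary point $x_0$ in the coordinates of Section 3. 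By Lemma 3.3 the $x$-derivatives of $c(\xi)$ and of $|\xi|^2$ vanish at $x_0$ unless the differentiation is in the normal direction $x_n$, so only the $j=n$ term of the sum survives; using $\partial_{x_n}(|\xi|^2)(x_0)=h'(0)|\xi'|^2$ and $\partial_{x_n}[c(\xi)](x_0)=\partial_{x_n}(c(\xi'))(x_0)$ one gets a closed form for $D_{x_n}(p_3^{-1})(x_0)$. Substituting these into $-p_3^{-1}\,\partial_{\xi_n}p_3\,D_{x_n}(p_3^{-1})$, repeatedly collapsing the Clifford products via $c(\xi)^2=-|\xi|^2$ and $c(\xi')c(dx_n)+c(dx_n)c(\xi')=0$, and putting everything over the common denominator $|\xi|^8$, yields the remaining terms of the stated formula.

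The only real work is this last bookkeeping step: keeping the noncommuting Clifford factors in their correct order and tracking the signs and powers of $|\xi|$ as the products are expanded. There is no conceptual difficulty — it is the same inversion scheme used to prove Lemma 3.7, carried one order higher for a third-order operator.
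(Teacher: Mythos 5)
Your proposal is correct and follows essentially the same route the paper takes: after Lemma 4.1 supplies $p_3=ic(\xi)|\xi|^2$ and $p_2=\sigma_2(D_v^*D_vD_v^*)$, the paper applies the composition recursion $q_{-3}=p_3^{-1}$, $q_{-4}=-p_3^{-1}[p_2p_3^{-1}+\sum_j\partial_{\xi_j}p_3\,D_{x_j}(p_3^{-1})]$ from (4.8) and evaluates at the boundary normal point $x_0$ using $c(\xi)^2=-|\xi|^2$ together with Lemma 3.3 to kill all tangential $j<n$ derivatives, exactly as you describe. You have correctly identified the inversion recipe, the reduction to the $j=n$ term, and the first summand $c(\xi)\sigma_2 c(\xi)/|\xi|^8$; the remaining bookkeeping of the Clifford factors is routine, as you note, and matches the paper's (unwritten) computation.
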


In the normal coordinate, $g^{ij}(x_{0})=\delta^{j}_{i}$ and $\partial_{x_{j}}(g^{\alpha\beta})(x_{0})=0$, if $j<n$; $\partial_{x_{j}}(g^{\alpha\beta})(x_{0})=h'(0)\delta^{\alpha}_{\beta}$, if $j=n$.
So by Lemma A.2 in \cite{Wa3}, we have $\Gamma^{n}(x_{0})=\frac{5}{2}h'(0)$ and $\Gamma^{k}(x_{0})=0$ for $k<n$. By the definition of $\delta^{k}$ and Lemma 2.3 in \cite{Wa3}, we have $\delta^{n}(x_{0})=0$ and $\delta^{k}=\frac{1}{4}h'(0)c(e_{k})c(e_{n})$ for $k<n$. By Lemma 4.2, we obtain

\begin{eqnarray}
\sigma_{-4}((D_2^{*}D_1D_2^{*})^{-1})(x_{0})|_{|\xi'|=1}&=&
\frac{c(\xi)\sigma_{2}((D_2^{*}D_1D_2^{*})^{-1})
(x_{0})|_{|\xi'|=1}c(\xi)}{|\xi|^8}
-\frac{c(\xi)}{|\xi|^4}\sum_j\partial_{\xi_j}\big(c(\xi)|\xi|^2\big)
D_{x_j}\big(\frac{ic(\xi)}{|\xi|^4}\big)\nonumber\\
&=&\frac{1}{|\xi|^8}c(\xi)\Big(\frac{1}{2}h'(0)c(\xi)\sum_{k<n}\xi_k
c(e_k)c(e_n)-\frac{1}{2}h'(0)c(\xi)\sum_{k<n}\xi_k
\bar{c}(e_k)\bar{c}(e_n)\nonumber\\
&&-\frac{5}{2}h'(0)\xi_nc(\xi)-\frac{1}{4}h'(0)|\xi|^2c(dx_n)
-2[c(\xi)\lambda_1l(v)c(\xi)-|\xi|^2\lambda_2\varepsilon(v^*)]\nonumber\\
&&+|\xi|^2\lambda_2\varepsilon(v^*)\Big)c(\xi)
+\frac{ic(\xi)}{|\xi|^8}\Big(|\xi|^4c(dx_n)\partial_{x_n}c(\xi')
-2h'(0)c(dx_n)c(\xi)\nonumber\\
&&+2\xi_{n}c(\xi)\partial_{x_n}c(\xi')+4\xi_{n}h'(0)\Big).
\end{eqnarray}

From the remark above, now we can compute $\Psi$ (see formula (4.2) for the definition of $\Psi$). We use ${\rm tr}$ as shorthand of ${\rm trace}$. Since $n=6$, ${\rm tr}_{\wedge ^*T^*M}[\texttt{id}]=64$.
Since the sum is taken over $r+\ell-k-j-|\alpha|-1=-6, \ r\leq-1, \ell\leq -3$, we have the $\int_{\partial_{M}}\Psi$
is the sum of the following five cases:

~\\
\noindent  {\bf case (a)~(I)}~$r=-1, l=-3, j=k=0, |\alpha|=1$.\\
By (4.2), we get
 \begin{equation}
{\rm case~(a)~(I)}=-\int_{|\xi'|=1}\int^{+\infty}_{-\infty}\sum_{|\alpha|=1}{\rm tr}
\Big[\partial^{\alpha}_{\xi'}\pi^{+}_{\xi_{n}}\sigma_{-1}(D_1^{-1})
      \times\partial^{\alpha}_{x'}\partial_{\xi_{n}}\sigma_{-3}((D_2^{*}D_1
      D_2^{*})^{-1})\Big](x_0)d\xi_n\sigma(\xi')dx'.
\end{equation}
By Lemma 4.2, for $i<n$, we have
 \begin{equation}
 \partial_{x_{i}}\sigma_{-3}((D_2^{*}D_1D_2^{*})^{-1})(x_0)=
      \partial_{x_{i}}\Big[\frac{ic(\xi)}{|\xi|^{4}}\Big](x_{0})
      =i\partial_{x_{i}}[c(\xi)]|\xi|^{-4}(x_{0})-2ic(\xi)\partial_{x_{i}}[|\xi|^{2}]|\xi|^{-6}(x_{0})=0.
\end{equation}
 so {\rm case~(a)~(I)} vanishes.
~\\

\noindent  {\bf case (a)~(II)}~$r=-1, l=-3, |\alpha|=k=0, j=1$.\\
By (4.2), we have
  \begin{equation}
{\rm case~(a)~(II)}=-\frac{1}{2}\int_{|\xi'|=1}\int^{+\infty}_{-\infty} {\rm
tr} \Big[\partial_{x_{n}}\pi^{+}_{\xi_{n}}\sigma_{-1}(D_1^{-1})
      \times\partial^{2}_{\xi_{n}}\sigma_{-3}((D_2^{*}D_1D_2^{*})^{-1})\Big](x_0)d\xi_n\sigma(\xi')dx'.
\end{equation}
By Lemma 4.2 and direct calculations, we have\\
\begin{equation}
\partial^{2}_{\xi_{n}}\sigma_{-3}((D_2^{*}D_1D_2^{*})^{-1})=i\bigg[\frac{(20\xi^{2}_{n}-4)c(\xi')+
12(\xi^{3}_{n}-\xi_{n})c(dx_{n})}{(1+\xi_{n}^{2})^{4}}\bigg].
\end{equation}
Since $n=6$, ${\rm tr}[-\texttt{id}]=-64$. By the relation of the Clifford action and ${\rm tr}AB={\rm tr}BA$,  then
\begin{eqnarray}
&&{\rm tr}[c(\xi')c(dx_{n})]=0; \ {\rm tr}[c(dx_{n})^{2}]=-64;\
{\rm tr}[c(\xi')^{2}](x_{0})|_{|\xi'|=1}=-64;\nonumber\\
&&{\rm tr}[\partial_{x_{n}}[c(\xi')]c(\texttt{d}x_{n})]=0; \
{\rm tr}[\partial_{x_{n}}c(\xi')c(\xi')](x_{0})|_{|\xi'|=1}=-32h'(0).
\end{eqnarray}
By (3.31), (4.14) and (4.15), we get
\begin{equation}
{\rm
tr} \Big[\partial_{x_{n}}\pi^{+}_{\xi_{n}}\sigma_{-1}(D_1^{-1})
      \times\partial^{2}_{\xi_{n}}\sigma_{-3}((D_2^{*}D_1D_2^{*})^{-1})\Big](x_0)
=64 h'(0)\frac{-1-3\xi_{n}i+5\xi^{2}_{n}+3i\xi^{3}_{n}}{(\xi_{n}-i)^{6}(\xi_{n}+i)^{4}}.
\end{equation}
Then we obtain

\begin{eqnarray}
{\rm case~(a)~(II)}&=&-\frac{1}{2}\int_{|\xi'|=1}\int^{+\infty}_{-\infty} h'(0)\frac{-8-24\xi_{n}i+40\xi^{2}_{n}+24i\xi^{3}_{n}}{(\xi_{n}-i)^{6}(\xi_{n}+i)^{4}}d\xi_n\sigma(\xi')dx'\nonumber\\
     &=&-\frac{15}{2}\pi h'(0)\Omega_{4}dx',
\end{eqnarray}
where ${\rm \Omega_{4}}$ is the canonical volume of $S^{4}.$\\

\noindent  {\bf case (a)~(III)}~$r=-1,l=-3,|\alpha|=j=0,k=1$.\\
By (4.2), we have
 \begin{equation}
{\rm case~ (a)~(III)}=-\frac{1}{2}\int_{|\xi'|=1}\int^{+\infty}_{-\infty}{\rm tr} \Big[\partial_{\xi_{n}}\pi^{+}_{\xi_{n}}\sigma_{-1}(D_1^{-1})
      \times\partial_{\xi_{n}}\partial_{x_{n}}\sigma_{-3}((D_2^{*}D_1D_2^{*})^{-1})\Big](x_0)d\xi_n\sigma(\xi')dx'.
\end{equation}
By Lemma 4.2 and direct calculations, we have\\
\begin{equation}
\partial_{\xi_{n}}\partial_{x_{n}}\sigma_{-3}((D_2^{*}D_1D_2^{*})^{-1})=-\frac{4 i\xi_{n}\partial_{x_{n}}c(\xi')(x_{0})}{(1+\xi_{n}^{2})^{3}}
      +i\frac{12h'(0)\xi_{n}c(\xi')}{(1+\xi_{n}^{2})^{4}}
      -i\frac{(2-10\xi^{2}_{n})h'(0)c(dx_{n})}{(1+\xi_{n}^{2})^{4}}.
\end{equation}
Combining (3.31) and (4.19), we have
\begin{equation}
{\rm tr} \Big[\partial_{\xi_{n}}\pi^{+}_{\xi_{n}}\sigma_{-1}(D_1^{-1})
      \times\partial_{\xi_{n}}\partial_{x_{n}}\sigma_{-3}((D_2^{*}D_1D_2^{*})^{-1})\Big](x_{0})|_{|\xi'|=1}
=8h'(0)\frac{8i-32\xi_{n}-8i\xi^{2}_{n}}{(\xi_{n}-i)^{5}(\xi+i)^{4}}.
\end{equation}
Then
\begin{eqnarray}
{\rm case~(a)~III)}&=&-\frac{1}{2}\int_{|\xi'|=1}\int^{+\infty}_{-\infty} 8h'(0)\frac{8i-32\xi_{n}-8i\xi^{2}_{n}}{(\xi_{n}-i)^{5}(\xi+i)^{4}}d\xi_n\sigma(\xi')dx'\nonumber\\
      &=&\frac{25}{2}\pi h'(0)\Omega_{4}dx'.
\end{eqnarray}

\noindent  {\bf case (b)}~$r=-1,l=-4,|\alpha|=j=k=0$.\\
By (4.2), we have
 \begin{eqnarray}
{\rm case~ (b)}&=&-i\int_{|\xi'|=1}\int^{+\infty}_{-\infty}{\rm tr} \Big[\pi^{+}_{\xi_{n}}\sigma_{-1}(D_1^{-1})
      \times\partial_{\xi_{n}}\sigma_{-4}((D_2^{*}D_1
     D_2^{*})^{-1})\Big](x_0)d\xi_n\sigma(\xi')dx'\nonumber\\
&=&i\int_{|\xi'|=1}\int^{+\infty}_{-\infty}{\rm tr} [\partial_{\xi_n}\pi^+_{\xi_n}\sigma_{-1}(D_1^{-1})\times
\sigma_{-4}((D_2^{*}D_1
      D_2^{*})^{-1})](x_0)d\xi_n\sigma(\xi')dx'.
\end{eqnarray}

By (3.31) and (4.23), we have
\begin{eqnarray}
&&{\rm tr} [\partial_{\xi_n}\pi^+_{\xi_n}\sigma_{-1}(D_1^{-1})\times
\sigma_{-4}(D_2^{*}D_1D_2^{*})^{-1}](x_0)|_{|\xi'|=1} \nonumber\\
&&=\frac{1}{2(\xi_{n}-i)^{2}(1+\xi_{n}^{2})^{4}}\big(\frac{3}{4}i+2+(3+4i)\xi_{n}+(-6+2i)\xi_{n}^{2}+3\xi_{n}^{3}+\frac{9i}{4}\xi_{n}^{4}\big)h'(0){\rm tr}
[id]\nonumber\\
&&+\frac{1}{2(\xi_{n}-i)^{2}(1+\xi_{n}^{2})^{4}}\big(-1-3i\xi_{n}-2\xi_{n}^{2}-4i\xi_{n}^{3}-\xi_{n}^{4}-i\xi_{n}^{5}\big){\rm tr[c(\xi')\partial_{x_n}c(\xi')]}\nonumber\\
&&-\frac{1}{2(\xi_{n}-i)^{2}(1+\xi_{n}^{2})^{4}}\big(\frac{1}{2}i+\frac{1}{2}\xi_{n}+\frac{1}{2}\xi_{n}^{2}+\frac{1}{2}\xi_{n}^{3}\big){\rm tr}
[c(\xi')\bar{c}(\xi')c(dx_n)\bar{c}(dx_n)]\nonumber\\
&&+{\rm tr} \Big[\pi^+_{\xi_n}\sigma_{-1}(D_1^{-1})\times\partial_{\xi_n}
\Big(\frac{3c(\xi)\lambda_2\varepsilon(v^*)c(\xi)}{|\xi|^6}
-\frac{2\lambda_1l(v)}{|\xi|^4}\Big)\Big](x_0)|_{|\xi'|=1}
\end{eqnarray}
By direct calculation, we have
\begin{eqnarray}
&&{\rm tr} \Big[\pi^+_{\xi_n}\sigma_{-1}(D_1^{-1})\times\partial_{\xi_n}
\Big(\frac{3c(\xi)\lambda_2\varepsilon(v^*)c(\xi)}{|\xi|^6}
-\frac{2\lambda_1l(v)}{|\xi|^4}\Big)\Big](x_0)|_{|\xi'|=1}\nonumber\\
&=&\frac{3(4i\xi_n+2)i}{2(\xi_{n}+i)(1+\xi^2_{n})^{3}}{\rm tr}\big[\lambda_2\varepsilon(v^*)c(\xi')\big]+\frac{3(4i\xi_n+2)}{2(\xi_{n}+i)(1+\xi^2_{n})^{3}}{\rm tr}\big[\lambda_2\varepsilon(v^*)c(dx_n)\big]\nonumber\\
&&+\frac{4\xi_n}{2(\xi_{n}-i)(1+\xi^2_{n})^{3}}{\rm tr}\big[\lambda_1l(v)c(\xi')\big]+\frac{4\xi_ni}{2(\xi_{n}-i)(1+\xi^2_{n})^{3}}{\rm tr}\big[\lambda_1l(v)c(dx_n)\big].
\end{eqnarray}
By the relation of the Clifford action and ${\rm tr}{AB}={\rm tr }{BA}$, then we have the following equalities:
\begin{eqnarray}
&&{\rm tr}[c(dx_n)l(v)]=32\langle dx_n,v\rangle;
~~{\rm tr }[c(\xi')l(v)]=32\langle \xi',v\rangle;\nonumber\\
&&{\rm tr}[c(dx_n)\varepsilon(v^*)]=-32\langle v^*,\frac{\partial}{\partial x_n}\rangle;
~~{\rm tr }[c(\xi')\varepsilon(v^*)]=-32\langle v^*,g(\xi',\cdot)\rangle;\nonumber\\
&&{\rm tr}[c(e_i)\bar{c}(e_i)c(e_n)\bar{c}(e_n)]=0~~(i<n).
\end{eqnarray}
So
\begin{eqnarray}
{\rm tr}
[c(\xi')\bar{c}(\xi')c(dx_n)\bar{c}(dx_n)]&=&
\sum_{i<n,j<n}{\rm tr}[\xi_{i}\xi_{j}c(e_i)\bar{c}
(e_j)c(dx_n)\bar{c}(dx_n)]=0.
\end{eqnarray}
By (4.25), then we have
\begin{eqnarray}
{\rm case~ (b)}&=&
 ih'(0)\int_{|\xi'|=1}\int^{+\infty}_{-\infty}64\times\frac{\frac{3}{4}i+2+(3+4i)\xi_{n}+(-6+2i)\xi_{n}^{2}+3\xi_{n}^{3}+\frac{9i}{4}\xi_{n}^{4}}{2(\xi_n-i)^5(\xi_n+i)^4}d\xi_n\sigma(\xi')dx'\nonumber\\ &+&ih'(0)\int_{|\xi'|=1}\int^{+\infty}_{-\infty}32\times\frac{1+3i\xi_{n}
 +2\xi_{n}^{2}+4i\xi_{n}^{3}+\xi_{n}^{4}+i\xi_{n}^{5}}{2(\xi_{n}-i)^{2}
 (1+\xi_{n}^{2})^{4}}d\xi_n\sigma(\xi')dx'\nonumber\\
 &+&-i\int_{|\xi'|=1}\int^{+\infty}_{-\infty}{\rm tr} \Big[\pi^+_{\xi_n}\sigma_{-1}(D_1^{-1})\times\partial_{\xi_n}
\Big(\frac{3c(\xi)\lambda_2\varepsilon(v^*)c(\xi)}{|\xi|^6}
-\frac{2\lambda_1l(v)}{|\xi|^4}\Big)\Big](x_0)|_{|\xi'|=1}d\xi_n\sigma(\xi')dx'\nonumber\\
&=&(-\frac{41}{8}i-\frac{195}{8})\pi h'(0)\Omega_4dx'
+(4\lambda_1+18\lambda_2)\pi\langle dx_n,v\rangle\Omega_4dx'.
\end{eqnarray}

\noindent {\bf  case (c)}~$r=-2,l=-3,|\alpha|=j=k=0$.\\
By (4.2), we have

\begin{equation}
{\rm case~ (c)}=-i\int_{|\xi'|=1}\int^{+\infty}_{-\infty}{\rm tr} \Big[\pi^{+}_{\xi_{n}}\sigma_{-2}(D_1^{-1})
      \times\partial_{\xi_{n}}\sigma_{-3}((D_2^{*}D_1D_2^{*})^{-1})\Big](x_0)d\xi_n\sigma(\xi')dx'.
\end{equation}
By (3.36) and (3,37), we have
\begin{eqnarray}
&&\pi^+_{\xi_n}\sigma_{-2}(D_1^{-1}(x_0))|_{|\xi'|=1}\nonumber\\
&=&\pi^+_{\xi_n}\Big[\frac{c(\xi)A(x_0)c(\xi)}{(1+\xi_n^2)^2}\Big]+\pi^+_{\xi_n}
\Big[\frac{c(\xi)(\lambda_1l(v)(x_0))c(\xi)}{(1+\xi_n^2)^2}\Big]
+\pi^+_{\xi_n}\Big[\frac{c(\xi)B(x_0)c(\xi)
+c(\xi)c(dx_n)\partial_{x_n}[c(\xi')](x_0)}{(1+\xi_n^2)^2}\nonumber\\
&&-h'(0)\frac{c(\xi)c(dx_n)c(\xi)}{(1+\xi_n^{2})^3}\Big].
\end{eqnarray}
By (3.40), we have
\begin{eqnarray}
&&\pi^+_{\xi_n}\Big[\frac{c(\xi)A(x_0)c(\xi)}{(1+\xi_n^2)^2}\Big]\nonumber\\
&=&-\frac{c(\xi')A(x_0)c(\xi')(2+i\xi_{n})}{4(\xi_{n}-i)^{2}}
+\frac{ic(\xi')A(x_0)c(dx_{n})}{4(\xi_{n}-i)^{2}}
+\frac{ic(dx_{n})A(x_0)c(\xi')}{4(\xi_{n}-i)^{2}}
\nonumber\\
&&+\frac{-i\xi_{n}c(dx_{n})A(x_0)c(dx_{n})}{4(\xi_{n}-i)^{2}}.
\end{eqnarray}
By (3.45)-(3,47), we have
\begin{eqnarray}
\pi^+_{\xi_n}\Big[\frac{c(\xi)B(x_0)c(\xi)+c(\xi)c(dx_n)
\partial_{x_n}[c(\xi')](x_0)}{(1+\xi_n^2)^2}\Big]
-h'(0)\pi^+_{\xi_n}\Big[\frac{c(\xi)c(dx_n)c(\xi)}{(1+\xi_n)^3}\Big]:= P_1-P_2,
\end{eqnarray}
where
\begin{eqnarray}
P_1&=&\frac{-1}{4(\xi_n-i)^2}[(2+i\xi_n)c(\xi')b_0^{2}(x_0)c(\xi')+i\xi_nc(dx_n)b_0^{2}(x_0)c(dx_n)\nonumber\\
&&+(2+i\xi_n)c(\xi')c(dx_n)\partial_{x_n}c(\xi')+ic(dx_n)b_0^{2}(x_0)c(\xi')
+ic(\xi')b_0^{2}(x_0)c(dx_n)-i\partial_{x_n}c(\xi')]
\end{eqnarray}
and
\begin{eqnarray}
P_2&=&\frac{h'(0)}{2}\left[\frac{c(dx_n)}{4i(\xi_n-i)}+\frac{c(dx_n)-ic(\xi')}{8(\xi_n-i)^2}
+\frac{3\xi_n-7i}{8(\xi_n-i)^3}[ic(\xi')-c(dx_n)]\right].
\end{eqnarray}
Similar to (4.30), we have
\begin{eqnarray}
&&\pi^+_{\xi_n}\Big[\frac{c(\xi)(l(v)(x_0))c(\xi)}{(1+\xi_n^2)^2}\Big]\nonumber\\
&=&-\frac{c(\xi')(l(v)(x_0))c(\xi')(2+i\xi_{n})}{4(\xi_{n}-i)^{2}}
+\frac{ic(\xi')(l(v)(x_0))c(dx_{n})}{4(\xi_{n}-i)^{2}}
+\frac{ic(dx_{n})(l(v)(x_0))c(\xi')}{4(\xi_{n}-i)^{2}}
\nonumber\\
&&+\frac{-i\xi_{n}c(dx_{n})(l(v)(x_0))c(dx_{n})}{4(\xi_{n}-i)^{2}}.
\end{eqnarray}
On the other hand,
\begin{equation}
\partial_{\xi_{n}}\sigma_{-3}((D_1^{*}D_2D_1^{*})^{-1})=\frac{-4 i \xi_{n}c(\xi')}{(1+\xi_{n}^{2})^{3}}+\frac{i(1-3\xi_{n}^{2})c(\texttt{d}x_{n})}
{(1+\xi_{n}^{2})^{3}}.
\end{equation}
By the relation of the Clifford action and ${\rm tr}{AB}={\rm tr }{BA}$, then we have equalities:
\begin{eqnarray}
{\rm tr}[Ac(dx_n)]=0;~~{\rm tr}[\bar {c}(\xi')\bar {c}(dx_n)]=0.
\end{eqnarray}

Then we have
\begin{eqnarray}
{\rm tr }\bigg[\pi^+_{\xi_n}\Big(\frac{c(\xi)A(x_0)c(\xi)}{(1+\xi_n^2)^2}\Big)\times
\partial_{\xi_n}\sigma_{-3}((D_v^{*}D_vD_v^{*})^{-1})(x_0)\bigg]\bigg|_{|\xi'|=1}=\frac{2-8i\xi_n-6\xi_n^2}{4(\xi_n-i)^{2}(1+\xi_n^2)^{3}}{\rm tr }[A(x_0)c(\xi')],
\end{eqnarray}
We note that $i<n,~\int_{|\xi'|=1}\{\xi_{i_{1}}\xi_{i_{2}}\cdots\xi_{i_{2d+1}}\}\sigma(\xi')=0$,
so ${\rm tr }[A(x_0)c(\xi')]$ has no contribution for computing {\rm case~c)}.

By (4.32) and (4.35), we have
\begin{eqnarray}
&&{\rm tr }[P_1\times\partial_{\xi_n}\sigma_{-3}((D_1^{*}D_2D_1^{*})^{-1})(x_0)]|_{|\xi'|=1}\nonumber\\
&=&{\rm tr }\Big\{ \frac{1}{4(\xi_n-i)^2}\Big[\frac{5}{2}h'(0)c(dx_n)-\frac{5i}{2}h'(0)c(\xi')
  -(2+i\xi_n)c(\xi')c(dx_n)\partial_{\xi_n}c(\xi')+i\partial_{\xi_n}c(\xi')\Big]\nonumber\\
&&\times \frac{-4i\xi_nc(\xi')+(i-3i\xi_n^{2})c(dx_n)}{(1+\xi_n^{2})^3}\Big\} \nonumber\\
&=&8h'(0)\frac{3+12i\xi_n+3\xi_n^{2}}{(\xi_n-i)^4(\xi_n+i)^3}.
\end{eqnarray}
By (4.33) and (4.35), we have
\begin{eqnarray}
&&{\rm tr }[P_2\times\partial_{\xi_n}\sigma_{-3}((D_2^{*}D_1D_2^{*})^{-1})(x_0)]|_{|\xi'|=1}\nonumber\\
&=&{\rm tr }\Big\{ \frac{h'(0)}{2}\Big[\frac{c(dx_n)}{4i(\xi_n-i)}+\frac{c(dx_n)-ic(\xi')}{8(\xi_n-i)^2}
+\frac{3\xi_n-7i}{8(\xi_n-i)^3}[ic(\xi')-c(dx_n)]\Big] \nonumber\\
&&\times\frac{-4i\xi_nc(\xi')+(i-3i\xi_n^{2})c(dx_n)}{(1+\xi_n^{2})^3}\Big\} \nonumber\\
&=&8h'(0)\frac{4i-11\xi_n-6i\xi_n^{2}+3\xi_n^{3}}{(\xi_n-i)^5(\xi_n+i)^3}.
\end{eqnarray}
By (4.34) and (4.35), we have
\begin{eqnarray}
&&{\rm tr }\bigg[\pi^+_{\xi_n}\Big(\frac{c(\xi)[\lambda_1l(v)(x_0)]
c(\xi)}{(1+\xi_n^2)^2}\Big)\times
\partial_{\xi_n}\sigma_{-3}((D_2^{*}D_1D_2^{*})^{-1})
(x_0)\bigg]\bigg|_{|\xi'|=1}\nonumber\\
&=&\frac{-2-8i\xi_n+6\xi_n^2}{4(\xi_n-i)^{2}(1+\xi_n^2)^{3}}{\rm tr }[\lambda_1l(v)(x_0)c(\xi')]
+\frac{-2i+8\xi_n+6i\xi_n^2}{4(\xi_n-i)^{2}(1+\xi_n^2)^{3}}{\rm tr }[\lambda_1c(dx_n)l(v)(x_0)].
\end{eqnarray}
By (4.25), we have
\begin{eqnarray}
{\rm case~(c)}&=&
 -i h'(0)\int_{|\xi'|=1}\int^{+\infty}_{-\infty}
 8\times\frac{-7i+26\xi_n+15i\xi_n^{2}}{(\xi_n-i)^5(\xi_n+i)^3}d\xi_n\sigma(\xi')dx' \nonumber\\
 &&-i\int_{|\xi'|=1}\int^{+\infty}_{-\infty}
 {\rm tr }\bigg[\pi^+_{\xi_n}\Big(\frac{c(\xi)[\lambda_1l(v)(x_0)]
c(\xi)}{(1+\xi_n^2)^2}\Big)\times
\partial_{\xi_n}\sigma_{-3}((D_2^{*}D_1D_2^{*})^{-1})
(x_0)\bigg]\bigg|_{|\xi'|=1}d\xi_n\sigma(\xi')dx'\nonumber\\
&=&-8i h'(0)\times\frac{2 \pi i}{4!}\Big[\frac{-7i+26\xi_n+15i\xi_n^{2}}{(\xi_n+i)^3}
     \Big]^{(5)}|_{\xi_n=i}\Omega_4dx'+(9\pi i-4\pi)\Big(\lambda_1\langle dx_n,v\rangle-i\lambda_1\langle\xi',v\rangle \Big)\Omega_4dx'\nonumber\\
&=&\Big[\frac{55}{2}\pi h'(0)+(9\pi i-4\pi)\lambda_1\langle dx_n,v\rangle\Big]\Omega_4dx'.
\end{eqnarray}
Since $\Psi$ is the sum of the cases a), b) and c),
\begin{eqnarray*}
\Psi&=&\frac{(65-41i)\pi h'(0)}{8}\Omega_4dx'
+(18\pi\lambda_2+9\lambda_1\pi i)\rangle\langle dx_n,v\rangle\Omega_4dx'.
\end{eqnarray*}

\begin{thm}
Let $M$ be a $6$-dimensional
compact oriented manifold with the boundary $\partial M$ and the metric
$g^M$ as above, $D_i$ and $D_i^*~(i=1,2)$ be statistical de Rham Hodge Operators on $\widehat{M}$, then
\begin{eqnarray}
&&\widetilde{{\rm Wres}}[\pi^+D_1^{-1}\circ\pi^+(D_2^*D_1
      D_2^*)^{-1}]\nonumber\\
&=&128\pi^3\int_{M}
\bigg[-\frac{16}{3}s-8(\lambda^2_1+\lambda^2_2-8\lambda_1\lambda_2)|v|^2+\frac{1}{2}{\rm tr}[\lambda_2\nabla^{TM}_{e_{j}}(\varepsilon(v^*))c(e_{j})\nonumber\\
&&-\lambda_1c(e_{j})\nabla^{TM}_{e_{j}}(l(v))]
\bigg]d{\rm Vol_{M}}+\int_{\partial M}\big[\frac{(65-41i)\pi h'(0)}{8}\Omega_4dx'\nonumber\\
&&
+(18\pi\lambda_2+9\lambda_1\pi i)\rangle\langle dx_n,v\rangle\big]\Omega_4dx'.
\end{eqnarray}
where $s$ is the scalar curvature.
\end{thm}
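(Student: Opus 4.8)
The strategy is the one systematized by Wang in \cite{Wa5}: I would apply the Fedosov--Golse--Leichtnam--Schrohe formula (Theorem \ref{th:32}), which in the form (4.1)--(4.2) splits $\widetilde{{\rm Wres}}[\pi^+D_v^{-1}\circ\pi^+(D_v^*D_vD_v^*)^{-1}]$ into an interior integral over $M\times S^*M$ of the symbol ${\rm trace}_{\wedge^*T^*M}[\sigma_{-4}((D_v^*D_v)^{-2})]$ and a boundary integral $\int_{\partial M}\Psi$. Since $D_v^*D_v$ is a generalized Laplacian, its Wodzicki residue is governed by the heat-coefficient formula (2.44) of Theorem 2.2, which in turn rests on the Lichnerowicz identity for $D_v^*D_v$ in Theorem 2.1 and the endomorphism $E'_{D_v^*D_v}(x_0)$ computed in its proof; specializing to $n=6$ and tracing over $\wedge^*T^*M$ (using ${\rm tr}[{\rm id}]=64$, ${\rm tr}[\bar c(e_i)\bar c(e_j)c(e_k)c(e_l)]=0$ for $i\neq j$, and the vanishing of odd Clifford traces) yields exactly the interior contribution (4.3), namely $128\pi^3\int_M[-\tfrac{16}{3}s+48|v|^2+\tfrac{1}{2}{\rm tr}[\nabla^{TM}_{e_j}(\varepsilon(v^*))c(e_j)-c(e_j)\nabla^{TM}_{e_j}(l(v))]]\,d{\rm Vol}_M$. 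So the new content is the boundary term $\Psi$.

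To compute $\Psi$, I would first record the symbols of the auxiliary operator $D_v^*D_vD_v^*$. From its local expression (4.4) one reads off $\sigma_3=ic(\xi)|\xi|^2$ and $\sigma_2$ (Lemma 4.1); then the composition formula for pseudodifferential operators gives $\sigma_{-3}((D_v^*D_vD_v^*)^{-1})=ic(\xi)|\xi|^{-4}$ and $\sigma_{-4}((D_v^*D_vD_v^*)^{-1})$ (Lemma 4.2), which I would evaluate at the fixed boundary point $x_0$ in the boundary normal coordinates (3.5)--(3.6), using Lemmas 3.3--3.5 together with $\Gamma^n(x_0)=\tfrac{5}{2}h'(0)$ and the values of $\delta^k(x_0)$. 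I also need $\sigma_{-1}(D_v^{-1})=ic(\xi)|\xi|^{-2}$ and $\sigma_{-2}(D_v^{-1})$ from Lemma 3.7. The index condition $r+\ell-k-j-|\alpha|-1=-6$ with $r\le-1$, $\ell\le-3$ leaves exactly five configurations, labelled (a)(I), (a)(II), (a)(III), (b), (c). For each I would apply $\pi^+_{\xi_n}$ via the Cauchy integral (2.16), take the $\wedge^*T^*M$-trace using the Clifford identities (in particular ${\rm tr}[c(dx_n)l(v)]=32\langle dx_n,v\rangle$, ${\rm tr}[c(dx_n)\varepsilon(v^*)]=-32\langle v^*,\partial/\partial x_n\rangle$, the vanishing of $\bar c\bar c cc$-type traces, and the fact that $c(\xi')A(x_0)$-type traces drop out after integrating over $S^4$ since they are odd in $\xi'$), and then integrate in $\xi_n$ over $\Gamma^+$ by residues and in $\xi'$ over $S^4$. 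Case (a) vanishes because $\partial_{x_i}\sigma_{-1}(D_v^{-1})(x_0)=0$ for $i<n$; cases (b) and (c) each split into an $h'(0)$-part, computed as in the $D^{-1}\circ D^{-3}$ calculation of \cite{Wa5}, and a $v$-part coming from the $l(v)$ and $\varepsilon(v^*)$ pieces of $\sigma_0(D_v)$, $\sigma_0(D_v^*)$ and of $\sigma_2(D_v^*D_vD_v^*)$. Summing the three nonzero cases gives $\Psi=\tfrac{(65-41i)\pi h'(0)}{8}\Omega_4\,dx'+(18\pi+9\pi i)\langle dx_n,v\rangle\Omega_4\,dx'$, and adding (4.3) produces the asserted formula.

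The main obstacle will be the bookkeeping in cases (b) and (c): one must expand $\pi^+_{\xi_n}\sigma_{-2}(D_v^{-1})$ and $\partial_{\xi_n}\sigma_{-4}((D_v^*D_vD_v^*)^{-1})$ into their $A(x_0)$-, $B(x_0)$-, $l(v)$- and $\varepsilon(v^*)$-components, keep track of which pieces survive the trace (the $A$-component produces only $c(\xi')A(x_0)$-type terms, which die after the $S^4$-integration, while the $B$-, $h'(0)$- and $v$-components survive), and then evaluate fourth- and fifth-order residues at $\xi_n=i$ — delicate but routine. A secondary point is that the $v$-dependent contributions from case (b) ($+22\pi\langle dx_n,v\rangle$) and case (c) ($(9\pi i-4\pi)\langle dx_n,v\rangle$) must be combined with the correct signs, and one should check that no $\langle\xi',v\rangle$-terms survive the integration over $S^4$.
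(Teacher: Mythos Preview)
Your overall strategy and the decomposition into interior term plus the five boundary cases (a)(I)--(a)(III), (b), (c) match the paper exactly, and your treatment of the interior term, of cases (b) and (c), and of the Clifford/trace identities is correct. However, there is one concrete error.

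You assert that ``Case (a) vanishes because $\partial_{x_i}\sigma_{-1}(D_v^{-1})(x_0)=0$ for $i<n$''. This argument only disposes of case (a)(I), where $|\alpha|=1$ forces a tangential $x'$-derivative (and in fact in (4.11) that derivative hits $\sigma_{-3}((D_v^*D_vD_v^*)^{-1})$, not $\sigma_{-1}(D_v^{-1})$, though both symbols have vanishing tangential $x'$-derivatives at $x_0$). Cases (a)(II) and (a)(III) involve $\partial_{x_n}$ and $\partial_{\xi_n}$ instead, and these do \emph{not} vanish. In the paper one finds
\[
{\rm case\ (a)(II)}=-\tfrac{15}{2}\pi h'(0)\Omega_4\,dx',\qquad
{\rm case\ (a)(III)}=\tfrac{25}{2}\pi h'(0)\Omega_4\,dx',
\]
so case (a) contributes $5\pi h'(0)\Omega_4\,dx'$. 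You may be extrapolating from the $4$-dimensional computation in Section~3, where the analogues (case 1)II) and 1)III)) happen to cancel; in six dimensions they do not. Without this $5\pi h'(0)$ piece, summing (b) and (c) alone gives an $h'(0)$-coefficient of $\tfrac{25-41i}{8}$, not the required $\tfrac{65-41i}{8}$ (and your phrase ``summing the three nonzero cases'' then has no consistent reading). Once you restore the (a)(II) and (a)(III) contributions, the rest of your outline goes through and agrees with the paper's proof.
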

\begin{cor}
When $\lambda_1=\lambda_2=1$, we get for a $4$-dimensional oriented
compact manifold $M$ with the boundary $\partial M$
\begin{eqnarray*}
&&\widetilde{{\rm Wres}}[\pi^+D^{-1}\circ\pi^+(D^*D
      D^*)^{-1}]\nonumber\\
&=&128\pi^3\int_{M}
\bigg[-\frac{16}{3}s+48|v|^2+\frac{1}{2}{\rm tr}[\nabla^{TM}_{e_{j}}(\varepsilon(v^*))c(e_{j})-c(e_{j})\nabla^{TM}_{e_{j}}(l(v))]
\bigg]d{\rm Vol_{M}}\nonumber\\
&&+\int_{\partial M}\Bigg(\frac{(65-41i)\pi h'(0)}{8}\Omega_4dx'
+(18\pi+9\pi i)\rangle\langle dx_n,v\rangle\Omega_4dx'\Bigg).
\end{eqnarray*}
where $s$ is the scalar curvature.
\end{cor}

\section*{Acknowledgements}
This work was supported by NSFC. 11771070 .
 The authors thank the referee for his (or her) careful reading and helpful comments.

\section*{References}


\begin{thebibliography}{00}

\bibitem{Op} B. Opozda.: Bochner's technique for statistical structures. Ann. Glob. Anal. Geom. 48(4):357-395, (2015)

\bibitem{Gu} V. W. Guillemin.: A new proof of Weyl's formula on the asymptotic distribution of eigenvalues. Adv. Math. 55, no. 2, 131-160, (1985).
\bibitem{Wo} M. Wodzicki.: local invariants of spectral asymmetry. Invent. Math. 75(1), 143-178, (1995).
\bibitem{We} S. N. Wei, Y. Wang.: Modified Novikov operators and the Kastler-Kalau-Walze-Type
Theorem for Manifolds with Boundary.Adv. Math. Phys. https://doi.org/10.1155/2020/9090656

\bibitem{Co1} A. Connes.: Quantized calculus and applications. 11th International Congress of Mathematical Physics(Paris,1994), Internat Press, Cambridge, MA, 15-36, (1995).
\bibitem{Co2} A. Connes.: The action functinal in Noncommutative geometry. Comm. Math. Phys. 117, 673-683, (1998).
\bibitem{Ka} D. Kastler.: The Dirac Operator and Gravitation. Comm. Math. Phys. 166, 633-643, (1995).
\bibitem{Ac} T. Ackermann.: A note on the Wodzicki residue. J. Geom.Phys. 20, 404-406, (1996).
\bibitem{Y} Y. Yu.: The Index Theorem and The Heat Equation Method, Nankai Tracts in Mathematics-Vol.2, World Scientific Publishing, (2001).

\bibitem{KW} W. Kalau, M. Walze.: Gravity, Noncommutative geometry and the Wodzicki residue. J. Geom. Physics. 16, 327-344,(1995).
\bibitem{Ac} T. Ackermann.: A note on the Wodzicki residue. J. Geom.Phys. 20, 404-406, (1996).

\bibitem{FGLS} B. V. Fedosov, F. Golse, E. Leichtnam, E. Schrohe.: The noncommutative residue for manifolds with boundary. J. Funct. Anal. 142, 1-31, (1996).
\bibitem{Wa1} Y. Wang.: Diffential forms and the Wodzicki residue for Manifolds with Boundary. J. Geom. Physics. 56, 731-753, (2006).
\bibitem{Wa2} Y. Wang.: Diffential forms the Noncommutative Residue for Manifolds with Boundary in the non-product Case. Lett. Math. Phys. 77, 41-51, (2006).
\bibitem{Wa3} Y. Wang.: Gravity and the Noncommutative Residue for Manifolds with Boundary. Lett. Math. Phys. 80, 37-56, (2007).

\bibitem{Wa4} Y. Wang.: Lower-Dimensional Volumes and Kastler-kalau-Walze Type Theorem for Manifolds with Boundary. Commun. Theor. Phys. Vol 54, 38-42, (2010).

\bibitem{lkl} J. A. \'{A}. L\'{o}pez, Y. A. Kordyukov, E. Leichtnam.: Analysis on Riemannian foliations of bounded geometry[J]. 2019. ariXiv: 1905. 12912.
\bibitem{Wa5} J. Wang, Y. Wang.: The Kastler-Kalau-Walze type theorem for six-dimensional manifolds with boundary. J. Math. Phys. 56, 052501 (2015).

\bibitem{FGLS} B. V. Fedosov, F. Golse, E. Leichtnam, E. Schrohe: The noncommutative residue for manifolds with boundary. J. Funct. Anal. 142, 1-31, (1996).
\bibitem{U}  W. J. Ugalde.: Differential forms and the Wodzicki residue, arXiv: Math, DG/0211361.

\bibitem{Po} R. Ponge.: Noncommutative geometry and lower dimensional volumes in Riemannian geometry. Lett. Math. Phys. 83, no.1, 19-32, (2008).

\bibitem{LI}B. Iochum, C. Levy.: Tadpoles and commutative spectral triples. J. Noncommut. Geom. 299-329(2011).
\end{thebibliography}
\end{document}